\newtheorem{theorem}{Theorem}[section]
\newtheorem{lemma}[theorem]{Lemma}
\newtheorem{remark}[theorem]{Remark}
\newtheorem{proposition}[theorem]{Proposition}
\numberwithin{equation}{section}
\newcommand{\C}{\mathbb C}
\newcommand{\R}{\mathbb R}
\newcommand{\Z}{\mathbb Z}
\def\A{\mathbf{A}}
\def\e{\varepsilon}
\def\H{\hat{H}^1}
\def\D{{\cal D}}
\def\ra{\rightarrow}
\def\wra{\rightharpoonup}
\begin{document}

\title{Magneto-static vortices in two dimensional Abelian gauge theories}
\author{ J. Bellazzini  \thanks{Dipartimento di Matematica Applicata,
Universit\`a di Pisa, Via F. Buonarroti 1/c, 56127 Pisa, ITALY.
e-mail: \texttt{j.bellazzini@ing.unipi.it, bonanno@mail.dm.unipi.it}}
\and C. Bonanno \addtocounter{footnote}{-1}\footnotemark
\and G. Siciliano \thanks{Dipartimento di Matematica, Universit\`a
di Bari, Via Orabona 4, 70125 Bari, ITALY. e-mail:
\texttt {siciliano@dm.uniba.it}
}}
\date{}
\maketitle
\abstract{We study the existence of vortices of the Klein-Gordon-Maxwell
equations in the two dimensional case. In particular we find sufficient conditions for the
existence of vortices in the magneto-static case, i.e when the electric potential $\phi=0$. 
This result, due to the lack of suitable embedding theorems  for the vector potential $\A$ is achieved with the help of a penalization method.}
\section{Introduction}

In the Abelian gauge theory the interaction between a matter
field $\psi$ obeying  the nonlinear Klein-Gordon equation and the electromagnetic field 
represented by the gauge potentials $(\A,\phi)$
is described  by considering the Lagrangian density (see e.g. \cite{Rub02}, \cite{Yan00}) 
\begin{equation}\label{lagg}
{\cal L}={\cal L}_{0}+{\cal L}_{1}-W(|\psi|)
\end{equation}
where $\psi: \R \times \R^{N} \to \C$ and
\begin{eqnarray}
{\cal L}_0 &=& \frac 12 \left [ |(\partial_t +i \phi)\psi|^2-|(\nabla-i A)\psi|^2 \right ] \nonumber \\
{\cal L}_1 &=& \frac 12 |\partial_t\A +\nabla \phi|^2-\frac 12 |\nabla \times A|^2 \nonumber
\end{eqnarray}
being $W$ is a suitable nonlinear term $W:\R^{+} \to \R$. Making the variation of the total action
\begin{equation}
{\cal S} (\psi,\phi,\A) =\int ({\cal L}_{0}+{\cal L}_{1}-W(|\psi|))\ dx dt
\end{equation}
with respect to $\psi,\phi,\A$ we have the following set of equations
\begin{eqnarray}
& (\partial_t+i\phi)^2\psi-(\nabla-i\A)^2\psi+W'(|\psi|)\frac{\psi}{|\psi|}=0 \label{prima} \\
& \nabla\cdot(\partial_t\A+\nabla\phi)=\left(\text{Im}\frac{\partial_t\psi}{\psi}
 +\phi\right)|\psi|^2 \label{seconda}\\
& \nabla\times\left(\nabla\times\A\right)+\partial_t\left(\partial_t\A+\nabla\phi\right)=
  \left(\text{Im}\frac{\nabla\psi}{\psi}-\A\right)|\psi|^2  \label{terza}&
\end{eqnarray}
which correspond to the Euler-Lagrange equations for (\ref{lagg}). We refer to
these equations as the Klein-Gordon-Maxwell (KGM) equations.

Many papers are concerned with the existence of stationary  solutions 
of (\ref{prima})-(\ref{terza}) in the static situation, i.e  functions of the following form
\begin{equation} \label{vortex}
	\psi(x,t)=u(x)e^{i(S(x)-\omega t)}\,,\,\,u\in\mathbb R^+\,,\,\,\omega\in\mathbb R,\,\,
	S\in\mathbb R/2\pi\mathbb Z.
\end{equation}
with the electromagnetic potentials satisfying
$$\partial_t\A=0\ \ \text{and}\ \ \partial_t\phi=0$$
In this case equations  (\ref{prima})-(\ref{terza}) become
\begin{eqnarray}
    &-\Delta u+\left[|\nabla S-\A|^2-(\phi-\omega)^2\right]u+W'(u)=0& \label{u}\\
  &-\nabla\cdot\left[(\nabla S-\A)u^2\right]=0&  \label{div}\\
    &-\Delta \phi=(\omega-\phi)u^2& \label{gauss}\\
    &\nabla\times\left(\nabla\times\A\right)=\left(\nabla S-\A\right)u^2.& \label{rot}
\end{eqnarray}
It is possible to have three types of stationary non-trivial solutions
\begin{itemize}
\item electro-static solutions: $\A=0$, $\phi \neq 0$
\item magneto-static solutions: $\A\neq 0$, $\phi = 0$
\item electromagneto-static solutions: $\A\neq 0$, $\phi \neq 0$
\end{itemize}
under suitable assumptions on the nonlinear term $W$.

If the stationary solution $\psi(x,t)=u(x)e^{i(S(x)-\omega t)}$ admits a phase that depends only on time, i.e $S(x)=0$, we call this solution a standing wave solution, whereas if $S(x)\neq 0$ we call this solution a vortex.

In the literature there exist results both for standing waves and vortices in the electro, magneto and  electromagneto-static case, see for instance the books \cite{FE81}, \cite{RA89} and the more recent papers \cite{BF}, \cite{BF02}, \cite{BF07} and \cite{DA04}. In particular, for what concerns the existence of vortices, the classical results of \cite{AB57} and \cite{NO73} are obtained in the two dimensional case with a double-well shaped function $W$ of the type $W(s)=( 1-s^2 )^2$, whereas in \cite{BF}
three dimensional vortices are studied with $W(s)=\frac 1 2 s^2-\frac{s^p}{p}$ with $2<p<6$.

In this paper we study two dimensional vortices in the magneto-static case, 
i.e for $\phi=0$. This problem has a physical relevance due to the fact that
two dimensional magneto-static vortices arise in superconductivity, see for instance \cite{Fe}. The assumption $\phi=0$ readily implies $\omega=0$, hence stationary solutions do not depend on time and have null angular momentum although they have non-vanishing magnetic momentum.

We consider solutions $\psi$ of equations (\ref{u})-(\ref{rot}) of the form (\ref{vortex}) with $\omega=0$ and $S(x)=k\theta(x)$ where $\theta$ is the angular function
$$
\theta(x)=\text{Im}\log(x_1+ix_2),\,\, x=(x_{1},x_{2}) \in \R^2\setminus\{0\}
$$
and $k \in \Z \setminus\{0\}$ is a constant. A solution $\psi$ with this choice of $S$ is a vortex and the constant $k$ is called the vorticity. Notice that the function $\theta$ and its gradient $\nabla\theta(x)=\left(\frac{x_2}{r^2},\frac{-x_1}{r^2},0\right)$ are $C^\infty$ in $\R^2\setminus \{0\}$ and $|\nabla\theta|=1/r.$

With this ansatz equations (\ref{u})-(\ref{rot}) reduce to
\begin{eqnarray}
  &-\Delta u+|k\nabla \theta -\A|^2u+W'(u)=0& \label{original1} \\
  &\nabla\times\left(\nabla\times\A\right)=\left(k\nabla \theta-\A\right)u^2.&
  \label{original2}
\end{eqnarray}
The existence of non-trivial solutions of (\ref{original1}) and (\ref{original2}) depends on the assumptions on the nonlinear term $W$. For example if $W'(s)s\ge0$ then one can prove that any solution $(u,\A)$ has necessarily $u\equiv 0$. We prove that under the following assumptions on $W$ there exists a solution with nontrivial $u$. 

We take the potential $W$ of the following type
$$
    W(s)=\frac{1}{2} s^2 - R(s)
$$
where $R:\R^{+} \to \R$ satisfies:
\begin{itemize}
    \item $R(0)=R'(0)=0$\,,
    \item $\exists\, c>0 \text{ and } p>2 \text{ such that } |R(s)|\le c s^p$\,,
    \item $s R'(s)\ge pR(s)>0\quad\text{for}\quad s>0$\,.
\end{itemize}

Before stating our main result, we make a short remark on notation. Our problem is defined in $\mathbb R^2$, however, to give sense to expressions like $\nabla \times \A$, vectors will be thought of as three-vectors with null third component and depending only on two variables $(x_{1},x_{2})$. In particular hereafter we use the notation $|\nabla\A|^2=\sum_{i,j=1}^{3}(\partial_i A_j)^2$.

The main result of the paper is the following.
\begin{theorem}  \label{Main}
Under the above conditions on the potential $W$ there exists a (non-trivial)
solution $(u_0,\A_0)$ of (\ref{original1}) and (\ref{original2}) in the
sense of distributions, where
\begin{itemize}
	\item $u_0$ is positive, radial and satisfies
   $$\int|\nabla u_0|^2\,dx+\int\, \left( 1+\frac{1}{r^2} \right)\, u_0^2 \, dx<+\infty, \ \ r^2=x_1^2+x_2^2\, ;$$
  \item $\A_0$ is divergence free and $\int|\nabla\A_0|^2\,dx<+\infty.$ 
\end{itemize}
\end{theorem}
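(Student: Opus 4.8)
The plan is to obtain $(u_0,\A_0)$ as a critical point of the energy functional
$$
J(u,\A)=\frac12\int|\nabla u|^2\,dx+\frac12\int u^2\,dx+\frac12\int|k\nabla\theta-\A|^2u^2\,dx+\frac12\int|\nabla\times\A|^2\,dx-\int R(u)\,dx ,
$$
whose Euler--Lagrange equations, computed separately with respect to $u$ and to $\A$, are precisely (\ref{original1}) and (\ref{original2}) (recall $W'(s)=s-R'(s)$, and we extend $R$ as an even function so that $J$ is defined on a linear space). The natural domain for $u$ is the space $\H$ of radial functions with $\int|\nabla u|^2+\int(1+r^{-2})u^2<\infty$, the weight $r^{-2}$ being forced by the term $k^2|\nabla\theta|^2u^2=k^2u^2/r^2$, while for $\A$ one takes divergence free fields with $\int|\nabla\A|^2<\infty$, on which $\int|\nabla\times\A|^2=\int|\nabla\A|^2$ by integration by parts. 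The superquadratic conditions $|R(s)|\le cs^p$ and $sR'(s)\ge pR(s)>0$ with $p>2$ give $J$ a mountain--pass geometry in $u$, while for $u$ fixed $\A\mapsto J(u,\A)$ is strictly convex; this suggests reducing to the $u$ variable by minimizing in $\A$.

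The obstruction is the one flagged in the abstract: the homogeneous space $\dot H^1(\R^2)$ admits no continuous embedding into any Lebesgue space, so neither the minimization in $\A$ nor the passage to the limit in the coupling $\int|\A|^2u^2$ is controlled by compactness. To circumvent this I would fix $\e>0$ and work with the penalized functional $J_\e(u,\A)=J(u,\A)+\frac\e2\int|\A|^2\,dx$, whose $\A$–space is now $H^1(\R^2)$ (with the divergence free constraint), which restores the needed functional--analytic structure. For each $\e$: for $u$ fixed, $J_\e(u,\cdot)$ is strictly convex and coercive on $H^1$, hence has a unique minimizer $\A_{\e,u}$, depending in a $C^1$ way on $u$ by the implicit function theorem applied to the $\A$–equation and inheriting the rotational equivariance of $u$ by uniqueness; the reduced functional $\tilde J_\e(u)=J_\e(u,\A_{\e,u})$ is then of class $C^1$, has mountain--pass geometry, and — thanks to the penalization together with the compact embedding of the radial part of $\H$ into $L^q(\R^2)$ for every $q>2$ — satisfies the Palais--Smale condition on the radial subspace. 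Crucially, the radius $\rho$, the barrier $\alpha>0$ on $\{\|u\|_{\H}=\rho\}$, and an upper bound $c$ for the min--max level (obtained along $t\mapsto tu_1$ with $u_1$ fixed and $\A\equiv0$) can be chosen \emph{uniformly in} $\e$, since $J_\e(tu_1,0)=J(tu_1,0)$ does not depend on $\e$. The mountain--pass theorem and the principle of symmetric criticality then produce a radial critical point $(u_\e,\A_\e)$ of $J_\e$ at level $c_\e\in[\alpha,c]$; replacing $u$ by $|u|$ we may take $u_\e\ge0$, and the strong maximum principle upgrades this to $u_\e>0$.

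The remaining, and genuinely delicate, step is the limit $\e\ra0$. Forming the combination $\tilde J_\e(u_\e)-\frac1p\langle\tilde J_\e'(u_\e),u_\e\rangle$ gives a sum of nonnegative terms bounded by $c_\e\le c$; since $p>2$ this yields bounds, \emph{uniform in} $\e$, for $\|u_\e\|_{\H}$, for $\int|\nabla\A_\e|^2$, for $\int|k\nabla\theta-\A_\e|^2u_\e^2$, and for $\e\int|\A_\e|^2$. Extracting subsequences, $u_\e\wra u_0$ in $\H$ and $u_\e\ra u_0$ in $L^q(\R^2)$ for every $q>2$ by radial compactness, while $\A_\e\wra\A_0$ in $\dot H^1(\R^2)$; since the $\A_\e$ are equivariant their averages on balls vanish, so by the Poincar\'e inequality and Rellich's theorem $\A_\e\ra\A_0$ strongly in $L^q_{\mathrm{loc}}(\R^2)$. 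These convergences let one pass to the limit, in the distributional formulation of (\ref{original1})--(\ref{original2}): the penalization disappears since $\e\int\A_\e\cdot\varphi\le\e^{1/2}\bigl(\e\int|\A_\e|^2\bigr)^{1/2}\|\varphi\|_{L^2}\ra0$; the bound $|R'(s)|\le C|s|^{p-1}$ handles the term $\int R'(u_\e)\psi$; and the coupling terms $\int(k\nabla\theta-\A_\e)u_\e^2\cdot\varphi$ and $\int|k\nabla\theta-\A_\e|^2u_\e\psi$ pass to the limit by combining the strong $L^q$ convergence of $u_\e$ with the strong $L^q_{\mathrm{loc}}$ convergence of $\A_\e$ and the uniform weighted bound $\int u_\e^2/r^2\le C$. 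Finally, $u_0\not\equiv0$: from $\langle\tilde J_\e'(u_\e),u_\e\rangle=0$ and $|R'(s)s|\le C|s|^p$ one gets $\|u_\e\|_{\H}^2\le C\|u_\e\|_{L^p}^p$, so $u_\e\ra0$ in $L^p$ would force $\tilde J_\e(u_\e)\ra0$, contradicting $c_\e\ge\alpha>0$. The limit $(u_0,\A_0)$ thus solves (\ref{original1})--(\ref{original2}) in the sense of distributions, with $u_0$ positive, radial and in $\H$, and $\A_0$ divergence free with $\int|\nabla\A_0|^2<\infty$. I expect the heart of the difficulty to lie in this last paragraph: extracting enough compactness for the weakly converging magnetic potential $\A_\e$ to pass to the limit inside the nonlinear coupling, while simultaneously ruling out the vanishing of $u_0$.
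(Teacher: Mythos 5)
Your overall strategy --- penalize with $\frac{\e}{2}\int|\A|^2$, restrict to radial $u$ and equivariant divergence-free $\A$, invoke symmetric criticality, extract $\e$-uniform bounds from $J_\e-\frac1p\partial_uJ_\e[u_\e]$ together with the $\e$-independent upper bound on the min--max level along $t\mapsto(t\bar u,\mathbf 0)$, and then let $\e\ra0$ --- is exactly the paper's. Two technical choices differ. You reduce to the $u$-variable by minimizing in $\A$ and invoke the Palais--Smale condition for the reduced functional $\tilde J_\e$; the paper never verifies (PS): it takes the weak limit of a PS sequence for the full $J_\e$ on $V=\H_r\times{\cal A}$, passes to the limit in the Euler--Lagrange equations using weak lower semicontinuity and the compact radial embedding, and rules out vanishing of the weak limit via $\|u_n\|_p^p\ge c>0$ (Proposition \ref{away}). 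Your local compactness for $\A_\e$ via Poincar\'e--Wirtinger for mean-zero equivariant fields plus Rellich is a genuinely different, and arguably cleaner, route to Lemma \ref{Lemma}(2), which the paper obtains from the explicit profile bound $b_\e(r)^2\le r^2K$ for $\A_\e=b_\e\nabla\theta$.

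There are, however, two concrete gaps. First, the uniform bound $\int u_\e^2/r^2\,dx\le C$ is not a direct output of the combination $\tilde J_\e(u_\e)-\frac1p\langle\tilde J_\e'(u_\e),u_\e\rangle$: that combination controls $\|u_\e\|_{H^1}$ and $\int|k\nabla\theta-\A_\e|^2u_\e^2\,dx$, and to convert the latter into the weighted bound near the origin you must show $|k-b_\e(r)|\ge c>0$ for small $r$ \emph{uniformly in} $\e$. This does follow from $\int|\nabla\A_\e|^2=\int_0^\infty (b_\e')^2r^{-1}\,dr\le K$, which gives $|b_\e(r)|\le r\sqrt K$, but the step has to be made; without it both the claim $u_0\in\H$ and the passage to the limit in the coupling terms are unjustified. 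Second, and more seriously, your limit procedure only yields the equation for $u_0$ tested against $v\in\H$ or against $v$ supported away from the origin, because in two dimensions ${\cal D}\not\subset\H$: any test function with $v(0)\ne0$ has $\int v^2/r^2=+\infty$. Concluding that $(u_0,\A_0)$ solves (\ref{original1}) \emph{in the sense of distributions} --- which is the statement of the theorem --- requires the cut-off argument of Proposition \ref{fine} (functions $\chi_n$ vanishing on $B_{1/n}$, equal to $1$ off $B_{2/n}$, with $|\nabla\chi_n|\le2n$ and $v^\pm\chi_n\wra v^\pm$ in $H^1$). Your proposal does not address this, and it is precisely the one place where ``passing to the limit in the distributional formulation'' is not routine. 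Two smaller items: the (PS) condition for $\tilde J_\e$ is asserted rather than proved (radial compactness gives $L^q$ for $q>2$ but not the $L^2$ or weighted parts of the $\H_r$-norm, so strong convergence of a PS sequence still needs the usual $\langle\tilde J_\e'(u_n)-\tilde J_\e'(u),u_n-u\rangle$ computation --- or can be dispensed with altogether, as the paper does); and with $R$ extended \emph{evenly} the replacement $u\mapsto|u|$ does not obviously preserve criticality, whereas the paper's extension $R\equiv0$ on $(-\infty,0]$ makes the test $v=u^-$ give $u\ge0$ at once.
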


This work has been inspirated by the recent work by Benci and Fortunato 
 \cite{BF}, in which the existence of three dimensional vortices for KGM 
in the electro, magneto and electromagneto-static case is proved under the same 
assumptions on $W$, by using a mountain pass argument in a suitable functional space. 

The two dimensional case, due to the lack of suitable embedding theorems 
concerning the vector potentials $\A$, shall be treated however with a different
approach. We cannot barely apply the same ideas of  \cite{BF} due to the fact that the same mountain pass argument cannot be used. In this paper we follow a penalization argument, finding solutions of the ``perturbed problem''
\begin{equation}        \tag{$P_\e$}   \label{Pee}
    \left\{
     \begin{array}{l}
     -\Delta u+|k\nabla \theta -\A|^2u+W'(u)=0 \vspace{0.1cm} \\
    \nabla\times\left(\nabla\times\A\right)+\e\A=\left(k\nabla\theta-\A\right)u^2
    \end{array}
    \right.
\end{equation}
for $\e\in(0,1)$. A solution of the initial problem (\ref{original1}) and (\ref{original2}) will then be obtained by taking the limit for $\e\ra0$ of the solutions $(u_\e,\A_\e)$ of (\ref{Pee}).

One of the advantages of the perturbed problem is that the space of vector potentials $\A$ can be chosen such that a mountain pass theorem can be applied to find ``weak'' solutions of the problem.

The paper is organized as follows: in Section \ref{sec:FunctionalFramework} we introduce all the functional spaces that will be used, and in Section \ref{A natural constraint} we introduce a natural constraint for the functional associated to (\ref{Pee}), that is a manifold on which the problem is more tractable. Finally, in Section \ref{sec:ProofOfTheorem} we prove the main theorem.

\section{Functional framework} \label{sec:FunctionalFramework}

In the following, unlike otherwise specified, all the integrals, norms and functional
spaces are intended on $\R^2$.

We denote by $\| \cdot \|_p$ the $L^p$ norm, $H^1$ is the usual Sobolev space with norm
$$
\|u\|_{H^1}^2=\int (|\nabla u|^2+u^2)\,dx
$$
and $\H$ is the weighted Sobolev space endowed with norm
$$
\|u\|_{\H}^2=\|u\|_{H^1}^2+\int\frac{u^2}{r^2}\,dx\,,\quad r^2=x_1^2+x_2^2.
$$
We denote the $L^p$ norm of a vector $\mathbf X$ as
$$
\| \mathbf X \|_p := \left\| \, (\mathbf X \, \mathbf X)^{\frac 1 2}\, \right\|_p
$$
where no symbol is used for the inner product between vectors. Using this notations,
$\|\A\|^2_{H^1}=\|\nabla\A\|^2_2+\|\A\|_2^2$ where $\|\nabla\A\|_2^{2} = \sum_{j} \|\nabla A_{j} \|_2^{2}$.

Let us define the space
$$
H=\H\times \left(H^1\right)^3
$$
with norm $\|(u,\A)\|_H^2=\|u\|^{2}_{\H} +\|\A\|_{H^1}^{2}$, and the functional on $H$
\begin{multline*}
    J_\e(u,\A)=\frac{1}{2}\int\left(|\nabla u|^2+|\nabla\times\A|^2\right)\,dx+
       \frac{1}{2}\int|k\nabla\theta-\A|^2u^2\,dx \\
    +\frac{\e}{2}\int |\A|^2\,dx+\int W(u)\,dx.
\end{multline*}
Straightforward computations show that $J_\e$ is well defined and $C^1$ on $H$ thanks to the growth conditions on $W$, and its Euler-Lagrange equations are (\ref{Pee}). Hence a critical point $(u,\A)$ of $J_\e$ in $H$ is a weak solutions of (\ref{Pee}), that is
\begin{align}
  &\int \left(\nabla u\nabla v+|k\nabla\theta-\A|^2uv+W'(u)v\right)dx=0\,,
     \forall\, v\in \H& \label{eq1} \\
  &\int\left((\nabla\times\A) \,(\nabla\times\mathbf V)+\e\A\mathbf V+
    (\A-k\nabla\theta)\mathbf V u^2\right)dx=0 \,, \forall\, \mathbf V\in (H^1)^3.& \label{eq2}
\end{align}
For details we refer to \cite{BF} where the case $N=3$ is treated. 

\begin{remark}
We can extend the potential $W$ to be defined on $\R$ by letting $R(s)=0$ for $s\le 0$. Using this extension one proves that if the couple $(u,\A)$ satisfies (\ref{eq1}) and (\ref{eq2}), then $u\ge0$ a.e., hence $u$ has some physical consistency. Indeed, denoting with $u^-(x)=\min\{u(x),0\}$
and taking $v=u^-$ in (\ref{eq1}), we have
$$
\int\left[|\nabla u^-|^2+|k\nabla\theta-\A|^2(u^-)^2+W'(u^-)u^-\right]\,dx=0.
$$
Since $W'(s)=s-R'(s)=s$ for $s\le0$ we have
$$
\int\left[|\nabla u^-|^2+|k\nabla\theta-\A|^2(u^-)^2+(u^-)^2\right]\,dx=0
$$
and then $u^-=0$ a.e.
\end{remark}

A difficulty that arises looking at vortices of KGM is that the space $\D$ of test functions is not contained in $\H$. Hence a weak solution of (\ref{Pee}), a priori, does not satisfies it
in the sense of distributions, specifically (\ref{eq1}) for $v\in\D$. Fortunately this circumstance does not happen. In Proposition \ref{fine} we show that a weak solution
satisfying (\ref{eq1}) and (\ref{eq2}) turns out to be a solution in the sense
of distributions. Hence we first find a weak solution of (\ref{Pee}) and then obtain also a solution in the sense of distributions.

\section{A natural constraint for $J_\e$} \label{A natural constraint}

To study the existence of critical points of the functional $J_{\e}$, we restrict ourselves to a submanifold of the space $H$. This is due to some difficulties. Although the introduction of the parameter $\e$ helps us to work in the familiar space $(H^1)^3$, the functional $J_\e$, contains a term, $\int|\nabla\times\A|^2\,dx$, which is not a Sobolev norm.

To overcome this problem, looking at the identity
\begin{equation} \label{id}	
	\int \left( |\nabla\times \mathbf X|^2+(\nabla\cdot \mathbf X)^2 \right) = \int |\nabla\mathbf X|^2
\end{equation}
for regular vectors $\mathbf X$ with compact support, it seems natural, if we want to deal with $|\nabla\A|^2$ in place of $|\nabla\times\A|^2$, to take the manifold of divergence free vector fields.

Moreover, by classical results on symmetric solutions of elliptic problems, we are naturally led to introduce a constraint also on $u$, considering only radial functions.

Hence we introduce a manifold $V\subset H$ such that
\begin{enumerate}
    \item [(a)]it is a ``natural constraint'' for $J_\e$, namely its constrained
               critical points on $V$ are critical points on $H$;
    \item [(b)] any $\A\in V$ is divergence free;
    \item [(c)] any $u$ in $V$ is radially symmetric. 
\end{enumerate}
To be more precise, define
$$
{\cal A}_0=\{b\nabla\theta: b\in C^\infty_0(\mathbb R^2\setminus\{0\})\text{ and radial} \}
$$
and let
$$
{\cal A}=\text{the closure of ${\cal A}_0$ in the $(H^1)^3$ norm}.
$$
Since we consider only radial functions $b(x)$, they only depend on $r=|x|=(x_1^2+x_2^2)^{1/2}$. Hence we simply write $b(r)$. Moreover, notice that if ${\cal X}$ is the closure in the norm
$$
\|f\|_*^2=\int_0^{+\infty} \frac{f^2(r)}{r}\,dr+\int_0^{+\infty} \frac{(f'(r))^2}{r}\,dr
$$
of $C^\infty_0(0,+\infty)$, then
$$
{\cal A}=\{ b\nabla\theta: b\in{\cal X}\}.
$$
Moreover any $b\in{\cal X}$ can be continuously extended to $0$ by setting $b(0)=0$ and it results $b(r)=\int_0^r b'(t)\,dt$.

Define also $\D_r=\{u\in\D: u=u(r)\}$ and
\begin{center}
$\H_r$=\, the closure of $\D_r$ in the $\H$ norm.
\end{center}
The natural manifold we consider is then defined by
\begin{equation} \label{V}
	V=\H_r\times{\cal A}
\end{equation}
with norm $\|(u,\A)\|_V=\|(u,\A)\|_H$ (see Section \ref{sec:FunctionalFramework}).
\begin{remark}
The manifold $V$ is closed and convex, hence it is weakly closed in $H$. This will be used in the next section.
\end{remark}

We summarise the main properties of $\A$ and the advantages to consider $V$. First, since we are now dealing with radial functions $u$, we recall the following result which is used in the computations.
\begin{theorem}[\cite{BdFP}] \label{compatti} 
The space $H^1_r(\mathbb R^2,\mathbb R)$ is compactly embedded in $L^s(\mathbb R^2,\mathbb R)$ for $s\in(2,+\infty).$
\end{theorem}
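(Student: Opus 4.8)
The plan is to prove the compact embedding $H^1_r(\mathbb R^2) \hookrightarrow L^s(\mathbb R^2)$ for $s \in (2,+\infty)$ by a standard decay-plus-interpolation argument, the so-called Strauss lemma adapted to dimension two. First I would establish the pointwise decay estimate: for every $u \in C_0^\infty(\mathbb R^2)$ radial one has, writing $u = u(r)$,
$$
r\, u^2(r) = -\int_r^{+\infty} \frac{d}{dt}\bigl(t\, u^2(t)\bigr)\, dt = -\int_r^{+\infty} \bigl(u^2(t) + 2t\, u(t) u'(t)\bigr)\, dt,
$$
so that by Cauchy--Schwarz (using $\int_0^\infty t\, u^2\, dt \sim \|u\|_2^2$ and $\int_0^\infty t\,(u')^2\, dt \sim \|\nabla u\|_2^2$ up to the $2\pi$ from the angular integration)
$$
|u(r)| \le \frac{C}{\sqrt r}\, \|u\|_{H^1}^{} \qquad \text{for all } r>0,
$$
and by density this holds for all $u \in H^1_r(\mathbb R^2)$. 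I would also note that, in two dimensions, the potentially delicate behaviour is near $r=0$; there the $H^1$ membership alone already controls things via the ordinary Sobolev embedding $H^1(\mathbb R^2) \hookrightarrow L^q_{\mathrm{loc}}$ for all $q < \infty$, so the genuinely new information from radiality is the uniform decay as $r \to \infty$.

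Next I would take a bounded sequence $(u_n) \subset H^1_r(\mathbb R^2)$, say $\|u_n\|_{H^1} \le M$, and extract (passing to a subsequence) a weak limit $u_n \wra u$ in $H^1_r$, which also gives $u_n \to u$ in $L^2_{\mathrm{loc}}$ and $L^q_{\mathrm{loc}}$ for every $q<\infty$ by Rellich--Kondrachov on balls. The goal is $u_n \to u$ strongly in $L^s(\mathbb R^2)$. Fix $s \in (2,\infty)$. On the exterior region $\{|x| \ge \rho\}$ I would use the decay estimate to write, for $v = u_n - u$ (which is still radial, bounded in $H^1$ by $2M$, and obeys the same decay bound),
$$
\int_{|x|\ge\rho} |v|^s\, dx \le \Bigl(\sup_{|x|\ge\rho} |v|\Bigr)^{s-2} \int_{|x|\ge\rho} |v|^2\, dx \le \Bigl(\frac{C\cdot 2M}{\sqrt\rho}\Bigr)^{s-2} \|v\|_2^2 \le \frac{C'}{\rho^{(s-2)/2}},
$$
which tends to $0$ uniformly in $n$ as $\rho \to \infty$. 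On the fixed ball $\{|x| < \rho\}$, the interior convergence $u_n \to u$ in $L^s(B_\rho)$ (valid since $s<\infty$ and $H^1 \hookrightarrow L^s(B_\rho)$ compactly in two dimensions) handles the remainder. Combining: given $\delta>0$, first choose $\rho$ so the exterior term is $<\delta$ for all $n$, then choose $N$ so the interior term is $<\delta$ for $n\ge N$; hence $\limsup_n \|u_n-u\|_s^s \le 2\delta$, and $\delta$ being arbitrary gives $u_n \to u$ in $L^s(\mathbb R^2)$.

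The main obstacle — really the only subtle point — is the endpoint $s=2$, which is excluded from the statement for good reason: the exterior estimate above degenerates (the exponent $(s-2)/2$ vanishes, giving no decay in $\rho$), and indeed mass can escape to infinity, so no compactness holds at $s=2$. For $s>2$ the argument is completely uniform, and for the upper range $s\to\infty$ there is no obstruction at all: the decay $|u|\le Cr^{-1/2}$ together with $\|u\|_2 \le M$ bounds $\|u\|_s$ for every finite $s$, and the two-dimensional local Sobolev embedding is subcritical for all finite exponents. I would close by remarking that this is precisely Theorem 1.1 of \cite{BdFP}, so one may simply cite it; the sketch above indicates why it holds.
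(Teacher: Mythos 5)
The paper does not prove this statement at all: it is quoted as a known result and attributed to \cite{BdFP}, so there is no internal proof to compare against. Your argument --- the radial decay estimate $|u(r)|\le C r^{-1/2}\|u\|_{H^1}$ (where the $-\int_r^{\infty}u^2\,dt$ term even has a favourable sign, so the bound holds for all $r>0$), combined with Rellich--Kondrachov on balls and the exterior interpolation bound that degenerates exactly at $s=2$ --- is correct and is essentially the standard Strauss-type proof of the cited theorem.
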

For what concerns the vectors $\A$, the identity (\ref{id}) and vector calculus imply that
\begin{lemma}
For $\mathbf A\in{\cal A}$ we have
\begin{itemize}
    \item [1)]$\int|\nabla\times\A|^2\,dx=\int|\nabla\A|^2\,dx$\,;
    \item [2)]$\nabla\times\left(\nabla\times\A\right)=-\Delta\A$\,.
\end{itemize}
\end{lemma}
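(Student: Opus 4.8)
The plan is to prove the two assertions of the lemma by reducing everything to the identity (\ref{id}) for compactly supported smooth vectors, then passing to the limit along a defining sequence in $\mathcal A_0$.

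For part 1), I would first take $\A \in \mathcal A_0$, so that $\A = b\nabla\theta$ with $b \in C^\infty_0(\R^2\setminus\{0\})$ radial; such a vector is smooth with compact support, so (\ref{id}) applies directly and gives $\int |\nabla\times\A|^2 = \int |\nabla\A|^2 - \int (\nabla\cdot\A)^2$. The key computational point is that $\nabla\cdot(b\nabla\theta) = \nabla b \cdot \nabla\theta + b\,\Delta\theta$, and since $b$ is radial its gradient is parallel to $x$ while $\nabla\theta$ is orthogonal to $x$ (indeed $\nabla\theta = (x_2,-x_1,0)/r^2$), so $\nabla b\cdot\nabla\theta = 0$; moreover $\Delta\theta = 0$ on $\R^2\setminus\{0\}$. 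Hence $\nabla\cdot\A = 0$ pointwise and (\ref{id}) reduces to $\int|\nabla\times\A|^2 = \int|\nabla\A|^2$ for $\A\in\mathcal A_0$. Both sides are continuous with respect to the $(H^1)^3$ norm (they are, up to the lower order $\|\A\|_2^2$ term, equivalent quadratic forms, and in particular each is dominated by $\|\A\|_{H^1}^2$), so the equality extends by density to all $\A\in\mathcal A = \overline{\mathcal A_0}^{(H^1)^3}$. This also shows en passant that every $\A\in\mathcal A$ is divergence free (property (b) of $V$), since $\nabla\cdot$ is continuous from $(H^1)^3$ into $L^2$ and vanishes on the dense subset $\mathcal A_0$.

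For part 2), I would argue similarly but at the level of the operator acting on vectors: for a smooth field one has the vector calculus identity $\nabla\times(\nabla\times\mathbf X) = \nabla(\nabla\cdot\mathbf X) - \Delta\mathbf X$. Applied to $\A = b\nabla\theta \in \mathcal A_0$, the first term drops out because $\nabla\cdot\A = 0$ as computed above, leaving $\nabla\times(\nabla\times\A) = -\Delta\A$ as an identity of smooth functions on $\R^2\setminus\{0\}$, hence (since $b$ has compact support away from the origin) as an identity of $L^2$ functions, and then as distributions. To get the statement for general $\A\in\mathcal A$ I would interpret both sides as distributions (or as elements of $(H^{-1})^3$): the maps $\A\mapsto \nabla\times(\nabla\times\A)$ and $\A\mapsto -\Delta\A$ are both continuous from $(H^1)^3$ to $(H^{-1})^3$, and they agree on the dense subspace $\mathcal A_0$, so they agree on $\mathcal A$.

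The routine parts here are the vector-calculus identities and the density/continuity arguments; the only place that requires a moment's care — and what I would flag as the ``main point'' rather than a genuine obstacle — is checking that $\nabla\cdot(b\nabla\theta)=0$ for radial $b$, i.e. the orthogonality of $\nabla b$ and $\nabla\theta$ together with the harmonicity of $\theta$ away from the origin, and making sure that no distributional mass is hidden at the origin (this is guaranteed by $b\in C^\infty_0(\R^2\setminus\{0\})$, whose support stays a positive distance from $0$). Once this is in place, both claims follow immediately from (\ref{id}) and the standard vector identity by approximation.
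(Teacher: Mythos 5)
Your proof is correct and follows exactly the route the paper intends: the paper gives no details, simply asserting that the lemma follows from the identity (\ref{id}) and vector calculus, and your argument supplies precisely the missing steps --- the computation $\nabla\cdot(b\nabla\theta)=\nabla b\cdot\nabla\theta+b\,\Delta\theta=0$ for radial $b$ supported away from the origin, followed by the density/continuity passage from $\mathcal{A}_0$ to $\mathcal{A}$. No gaps.
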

On $V$ the functional $J_\e$ has the following form to which we refer 
hereafter
\begin{multline}
    J_\e(u,\A)=\frac{1}{2}\int\left(|\nabla u|^2+|\nabla\A|^2\right)\,dx+
    \frac{1}{2}\int|k\nabla\theta-\A|^2u^2\,dx  \\
    +\frac{\e}{2}\int |\A|^2\,dx+\int W(u)\,dx \label{Je}
\end{multline}
A critical point $(u_0,\A_0)$ of $J$ on $V$ satisfies:
\begin{align}
  &\int \left(\nabla u_0\nabla v+|k\nabla\theta-\A_0|^2uv+W'(u_0)v\right)dx=0
     \,,\forall\, v\in \H_r& \label{wsol1} \\
  &\int\left(\nabla\A_0 \cdot \nabla\mathbf V+\e\A_0\mathbf V+(\A_0-k\nabla\theta)\mathbf V u^2\right)dx=0
    \,, \forall\, \mathbf V\in {\cal A}.& \label{wsol2}
\end{align}
i.e. it is a weak solution in $V$ of
\begin{equation}        \tag{$P_\e$}   \label{Pe}
    \left\{
     \begin{array}{l}
     -\Delta u+|k\nabla \theta -\A|^2u+W'(u)=0 \vspace{0.1cm} \\
     -\Delta \A+\e\A=\left(k\nabla\theta-\A\right)u^2
    \end{array}
    \right.
\end{equation}
The manifold $V$ defined in (\ref{V}) is a natural constraint according to the following theorem.
\begin{theorem} \label{natural}
Assume that $(u_0,\A_0)$ is a critical point of $J_\e$ in $V$, i.e.
\begin{equation*} 
    d J_\e(u_0,\A_0)[v,\mathbf V]=0\quad \forall\,(v,\mathbf V)\in V.
\end{equation*}
Then
\begin{equation} \label{conv}
	d J_\e(u_n,\A_n)[v,\mathbf V]=0\quad \forall\,(v,\mathbf V)\in \H\times(H^1)^3.
\end{equation}
\end{theorem}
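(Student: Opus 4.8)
\emph{Plan of proof.} The statement is an instance of the principle of symmetric criticality, and the plan is to make this explicit by an averaging argument over the $SO(2)$-symmetry of the problem. Write the differential as $dJ_\e(u_0,\A_0)[v,\mathbf V]=\ell_1(v)+\ell_2(\mathbf V)$, where
\begin{align*}
\ell_1(v)&=\int\big(\nabla u_0\nabla v+|k\nabla\theta-\A_0|^2u_0v+W'(u_0)v\big)\,dx,\\
\ell_2(\mathbf V)&=\int\big((\nabla\times\A_0)(\nabla\times\mathbf V)+\e\A_0\mathbf V+(\A_0-k\nabla\theta)\mathbf V u_0^2\big)\,dx.
\end{align*}
Since $J_\e\in C^1(H)$, $\ell_1$ is a bounded linear functional on $\H$ and $\ell_2$ one on $(H^1)^3$; by hypothesis --- i.e.\ by (\ref{wsol1}) --- $\ell_1$ vanishes on $\H_r$, and by (\ref{wsol2}), together with (\ref{id}) and $\nabla\cdot\A_0=0$, $\ell_2$ vanishes on ${\cal A}$. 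It is therefore enough to show $\ell_1\equiv0$ on $\H$ and $\ell_2\equiv0$ on $(H^1)^3$.

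For $\ell_1$ I would average over rotations. Let $R_\alpha$ be the rotation of angle $\alpha$ and set $\bar v(x)=\frac1{2\pi}\int_0^{2\pi}v(R_\alpha x)\,d\alpha$. Because $u_0$, $|k\nabla\theta-\A_0|^2=(k-b)^2/r^2$ and $W'(u_0)$ are radial, $\ell_1$ is rotation invariant, so $\ell_1(v)=\ell_1(\bar v)$. Angular averaging is a contraction of $\H$ onto its radial subspace, which coincides with $\H_r$ (a routine truncation--mollification using $\int_0^{+\infty}u^2/r\,dr<+\infty$ near the origin), so $\bar v\in\H_r$ and $\ell_1(v)=0$.

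For $\ell_2$ I would proceed in three steps. First, since $\A_0$ and $\nabla\theta$ have zero third component and $\nabla\times\A_0$ is parallel to the vertical axis, the integrand of $\ell_2$ does not involve the third component of $\mathbf V$, and one may assume $\mathbf V=(V_1,V_2,0)$. Second, $\ell_2$ is invariant under the natural rotation action $\mathbf V\mapsto\mathbf V_\alpha:=R_\alpha^{-1}\mathbf V(R_\alpha\,\cdot)$ --- here one uses $\nabla\theta(R_\alpha x)=R_\alpha\nabla\theta(x)$, the radial symmetry of $u_0$, and the transformation law of the curl under rotations --- so $\ell_2(\mathbf V)=\ell_2(\widetilde{\mathbf V})$, where $\widetilde{\mathbf V}$ is the $SO(2)$-average, an $SO(2)$-invariant planar field in $(H^1)^2$. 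Third, such a field decomposes $L^2$-orthogonally as $\widetilde{\mathbf V}=a(r)\widehat r+b(r)\nabla\theta$ (radial plus ``angular'' part, $\widehat r=x/|x|$); a short computation in Cartesian coordinates gives $\nabla(a\widehat r)\cdot\nabla(b\nabla\theta)\equiv0$, whence $\int|\nabla\widetilde{\mathbf V}|^2=\int|\nabla(a\widehat r)|^2+\int|\nabla(b\nabla\theta)|^2$ and both summands lie in $(H^1)^2$. Therefore $b\nabla\theta$ is a radial field of $(H^1)^2$, so $b\in{\cal X}$ and $b\nabla\theta\in{\cal A}$, giving $\ell_2(b\nabla\theta)=0$; and $\nabla\times(a(r)\widehat r)=0$ (a radial field is curl-free), while $\widehat r$ is pointwise orthogonal to both $\A_0$ and $\nabla\theta$, so every term of $\ell_2(a\widehat r)$ vanishes. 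Hence $\ell_2(\widetilde{\mathbf V})=0$, and $\ell_2\equiv0$ on $(H^1)^3$. Together with the previous paragraph this yields (\ref{conv}).

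\emph{Main obstacle.} The invariance identities are routine; the substance is in the third step for $\ell_2$: showing that the orthogonal splitting of an $SO(2)$-invariant field of $(H^1)^2$ into radial and angular parts keeps \emph{both} pieces in $(H^1)^2$ --- delicate because $\widehat r$ and $\nabla\theta$ are singular at the origin, and resolved precisely by the pointwise orthogonality $\nabla(a\widehat r)\cdot\nabla(b\nabla\theta)\equiv0$ --- together with the norm comparison that identifies radial $(H^1)^2$-fields of the form $b\nabla\theta$ with the elements $b\in{\cal X}$, hence with ${\cal A}$. Both reduce to elementary polar-coordinate estimates and the density definitions of ${\cal X}$ and $\H_r$.
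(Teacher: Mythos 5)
Your proof is correct, and its core is the same as the paper's: both arguments reduce the problem to the fixed-point set of the $O(2)$-action, whose vector part consists of fields $a(r)\widehat r+b(r)\nabla\theta$, and both rest on the identical key computation that the purely radial piece $a(r)\widehat r$ is curl-free and pointwise orthogonal to $\A_0$ and $\nabla\theta$, so every term of $\partial_\A J_\e(u_0,\A_0)[a(r)\widehat r]$ vanishes, while the angular piece lies in ${\cal A}$ where the derivative vanishes by hypothesis. The difference is in how one passes from ``critical on the fixed-point set'' to ``critical on all of $\H\times(H^1)^3$'': the paper invokes the Palais Principle of Symmetric Criticality as a black box, whereas you prove the needed instance of it directly by $SO(2)$-averaging, using the rotation invariance of $\ell_1$ and $\ell_2$ and the fact that averaging is a bounded projection onto the invariant subspace. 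Your route is more self-contained and makes visible exactly which invariance identities are used (including the useful observation that $\ell_2$ does not see the third component of $\mathbf V$, and the pointwise identity $\nabla(a\widehat r)\cdot\nabla(b\nabla\theta)\equiv 0$ that keeps both pieces of the splitting in $(H^1)^2$); the paper's route is shorter at the cost of importing the general principle. One caveat applies equally to both arguments: the identifications ``radial part of $\H$ equals $\H_r$'' and ``$SO(2)$-invariant angular fields of $(H^1)^2$ equal ${\cal A}$'' are density statements about closures of smooth compactly supported functions, which you flag as routine and the paper asserts without comment; they do need the truncation--mollification argument you indicate, but neither proof is weaker than the other on this point.
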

\begin{proof}
The result will be obtained making use of the Palais Principle of Symmetric Criticality \cite{Pa79}.

Let us first observe that $J_\e$ is invariant under the group action
$$
T_g:(u,\A)\mapsto(u\circ g,g^{-1}\circ \mathbf V\circ g)
$$
where $g\in O(2)$ is a rotation in $\R^2$. We compute the set of fixed points for this action. Clearly in the first variable, $u$, this set is nothing but $\H_r$
and
\begin{equation} \label{vincolo u}
	\partial_u J_\e(u_0,\A_0)[v]=0\ \ \text{for any } v\in\H_r.
\end{equation}
Moreover writing a generic vector $\mathbf V(x,y)$ as
$$
\mathbf V(x,y)=a(x_1,x_2)\mathbf{t}+b(x_1,x_2)\mathbf r
$$
where $\mathbf t=(x_2/r,-x_1/r)$ and $\mathbf r=(x_1/r,x_2/r)$, being as usual $r^2=x_1^2+x_2^2$, the requirement that $g^{-1}\circ \mathbf V\circ g=\mathbf V$ implies that the coefficients $a$ and $b$ are radial. Hence vectors of type $a(r)\mathbf t+b(r)\mathbf r$ are fixed by the action of $T_g$ on the second variable.

We claim that
\begin{equation} \label{vincolo A}
	\partial_\A J_\e(u_0,\A_0)[a(r)\mathbf t+b(r)\mathbf r]=0.
\end{equation}
Indeed, by assumption,
$$
\A_0=b_0\nabla\theta=\frac{b_0(r)}{r}\mathbf t
$$
and $\partial_\A J_\e(u_0,\A_0)[a(r)\mathbf t]=0$.
In order  to prove (\ref{vincolo A}) we have only to show that
$$
\partial_\A J_\e(u_0,\A_0)[b(r)\mathbf r]=0.
$$
Since $\nabla\times(b(r)\mathbf r)=0$ and the vectors $\mathbf t$ and $\mathbf r$ are orthogonal, we have
\begin{multline*}
	\partial_\A J_\e(u_0,\A_0)[b(r)\mathbf r]=\int \left( \nabla\times \A_0\right) \, \left( \nabla\times(b(r)\mathbf r) \right)\,dx
	\\+\int u^2(\A_0-k\nabla\theta)b(r)\mathbf r\,dx+\e\int\A_0\, b(r)\mathbf r\,dx=0
\end{multline*}
which proves the claim.

We conclude, by (\ref{vincolo u}) and (\ref{vincolo A}), that the couple $(u_0,\A_0)$ is a
critical point of $J_\e$ on the set of fixed points for the action of $T_g$ on $\H\times(H^1)^3.$ Hence the Palais Principle applies and we get (\ref{conv}).
\end{proof}

\section{Proof of Theorem \ref{Main}} \label{sec:ProofOfTheorem}

By the previous results, we are reduced to study the functional $J_\e$ defined in (\ref{Je}) on $V= \H_{r} \times {\cal A}$.

\subsection{Solution of the perturbed problem}

\begin{proposition}\label{wls}
The functional $J_\e$ is weakly lower semicontinuous on $V$.
\end{proposition}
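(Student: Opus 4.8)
The plan is to decompose $J_\e$ on $V$ into three parts: a part that is convex and (strongly) continuous, hence weakly lower semicontinuous; the term $-\int R(u)\,dx$, which I will show is in fact weakly continuous; and the nonnegative cross term $\frac12\int|k\nabla\theta-\A|^2u^2\,dx$, which I will handle by Fatou's lemma. Throughout, let $(u_n,\A_n)\wra(u_0,\A_0)$ in $V$; since $V$ is weakly closed the weak limit indeed belongs to $V$, and the sequence is bounded in $H$.

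First, the functionals $(u,\A)\mapsto\int|\nabla u|^2\,dx$, $\int|\nabla\A|^2\,dx$, $\int|\A|^2\,dx$ and $\int u^2\,dx$ are convex and continuous on $V\subset H$, hence weakly lower semicontinuous; this accounts for every term of $J_\e$ except the cross term and $-\int R(u)\,dx$. For the latter I would invoke compactness: by Theorem \ref{compatti} together with the continuous inclusion $\H_r\hookrightarrow H^1_r$, the sequence $u_n$ converges to $u_0$ strongly in $L^p$ (recall $p>2$), and since $|R(s)|\le c|s|^p$ the Nemytskii operator $u\mapsto R(u)$ maps $L^p$ continuously into $L^1$; hence $\int R(u_n)\,dx\ra\int R(u_0)\,dx$.

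The main obstacle is the cross term $B_n:=\int|k\nabla\theta-\A_n|^2u_n^2\,dx$, which is not jointly convex in $(u,\A)$ and so cannot be treated by convexity alone. I would argue by Fatou's lemma after extracting a subsequence along which the nonnegative integrand converges a.e.\ to $|k\nabla\theta-\A_0|^2u_0^2$. Strong $L^p$ convergence already gives $u_n\ra u_0$ a.e.\ along a subsequence. For the potentials, write $\A_n=b_n\nabla\theta$ with $b_n\in{\cal X}$; using $b_n(r)=\int_0^r b_n'(t)\,dt$ and Cauchy--Schwarz one gets, on every interval $[0,M]$, the estimate $|b_n(r_2)-b_n(r_1)|\le\|b_n\|_*\,(M(r_2-r_1))^{1/2}$. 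Since $\|b_n\|_*$ is bounded and $b_n(0)=0$, the family $\{b_n\}$ is equibounded and equicontinuous on every $[0,M]$, so by Ascoli--Arzel\`a and a diagonal argument a subsequence of $b_n$ converges locally uniformly on $[0,+\infty)$, necessarily to $b_0$ (identified through the bounded linear functionals $f\mapsto\int_0^{+\infty}f\phi/r\,dr$, $\phi\in C_0^\infty(0,+\infty)$). Hence $\A_n\ra\A_0$ a.e.\ along this subsequence, and Fatou gives $\liminf B_n\ge\int|k\nabla\theta-\A_0|^2u_0^2\,dx$ along it; choosing the subsequence so as to realise $\liminf_n B_n$ beforehand, this inequality holds for the whole sequence.

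Finally I would assemble the pieces. Writing $J_\e(u_n,\A_n)=A_n+\frac12 B_n+C_n$, where $A_n$ collects the convex terms and $C_n=-\int R(u_n)\,dx$, we have $\liminf_n A_n\ge A(u_0,\A_0)$, $\liminf_n B_n\ge B(u_0,\A_0)$, and $C_n\ra C(u_0)$; superadditivity of $\liminf$ (using that $C_n$ converges) then yields $\liminf_n J_\e(u_n,\A_n)\ge J_\e(u_0,\A_0)$, which is the claim. The only genuinely delicate point is the a.e.\ convergence of $\A_n$, i.e.\ the local-uniform control of the radial profiles $b_n$ coming from the weighted norm $\|\cdot\|_*$.
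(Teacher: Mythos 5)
Your proof is correct, but it takes a genuinely different route from the paper's. The paper expands the square and reduces everything to showing that the two coupling terms $\int|\A_n|^2u_n^2\,dx$ and $\int\nabla\theta\,\A_n u_n^2\,dx$ are \emph{weakly continuous}; this is done with the compact embedding of $H^1_r$ into $L^s$, $s\in(2,+\infty)$, the Sobolev embeddings for $\A_n$, and a chain of weighted H\"older estimates in which the singular factor $1/r$ is absorbed by the $\int u^2/r^2$ part of the $\H_r$ norm. You instead keep $\int|k\nabla\theta-\A|^2u^2\,dx$ intact as a single nonnegative term and prove only its weak \emph{lower} semicontinuity, via Fatou after extracting a.e.\ convergence of $\A_n$; the a.e.\ convergence is where your argument does real work, exploiting the one-dimensional structure of ${\cal A}$: the H\"older-$\tfrac12$ bound $|b_n(r_2)-b_n(r_1)|\le\|b_n\|_*\left(M(r_2-r_1)\right)^{1/2}$ plus Ascoli--Arzel\`a gives local uniform convergence of the profiles $b_n$ to $b_0$ (the identification through the weak $L^2$ limit is correct, and $\|b_n\|_*$ is controlled by $\|\A_n\|_{H^1}$ via $\|\A\|_2^2=2\pi\int_0^\infty b^2/r\,dr$ and $\|\nabla\A\|_2^2=2\pi\int_0^\infty (b')^2/r\,dr$ --- worth stating explicitly). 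Your route is cleaner and avoids the H\"older gymnastics, at the price of using the radial structure of ${\cal A}$ essentially (which is legitimate, since the statement is on $V$) and of delivering a weaker per-term conclusion: lower semicontinuity rather than continuity of the coupling terms. That is all this proposition needs, but note that the paper's stronger weak-continuity facts \eqref{convdeb} are reused verbatim in the proof of Proposition \ref{e-soluz} to pass to the limit in the Euler--Lagrange equations, where lower semicontinuity would not suffice; if one adopted your proof here, that later step would have to be argued independently (e.g.\ by dominated convergence using the same a.e.\ convergence you established). Also, your liminf bookkeeping (pass first to a subsequence realising $\liminf B_n$, then superadditivity with the convergent term $C_n$) is sound since $A_n,B_n\ge0$.
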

\begin{proof}
Using (\ref{Je}) we can write
\begin{multline*}
J_\e(u,\A)=\frac{1}{2}\|u\|^{2}_{\H_r}+\frac{1}{2}\|\nabla\A\|^{2}_{2}+
  \frac{\e}{2}\|\A\|^{2}_{2}-\int R(u)\,dx \\
  +\frac{1}{2}\int |\A|^2 u^2\,dx-k\int\nabla\theta\A u^2\,dx,
\end{multline*}
hence it is sufficient to show that the last two terms are weakly continuous.

Let $(u_n,\A_{n})\wra (u,\A)$ in $V$ for a given $(u,\A)\in V$. Then the norms $\| (u_n,\A_{n}) \|_{H}$ are bounded. We prove that
\begin{equation}     \label{convdeb}
    \int |\A_n|^2 u_n^2\,dx\ra\int|\A|^2 u^2\,dx \quad  \quad
    \int \nabla\theta\, \A_n u_n^2\,dx\ra\int \nabla\theta\, \A u^2\,dx.
\end{equation}
To prove the first convergence, we write
$$
\left| \int \left(|\A_n|^2u_n^2 - |\A|^2 u^2 \right) \, dx \right| \le  a_n+b_n
$$
with 
$$
a_n= \int|\A_n|^2|u_n^2-u^2|\,dx \le \|\A_n\|^{2}_{4}\left(\int\left|u_n^2-u^2\right|^2\,dx\right)^{1/2}
$$
$$
b_n= \left| \int u^2 \left( |\A_n|^2 - |\A|^2 \right) \, dx \right|
$$
By the compactness result of Theorem \ref{compatti}, up to a sub-sequence, we can assume that
$$
u_n^2\ra u^2\;\,a.e.\quad \text{ and } \quad\|u_n^2\|_2\ra\|u^2\|_2\,,
$$
and by the classical Sobolev embedding $H^{1}(\R^{2}) \subset L^{q}(\R^{2})$ for all $q\in [2,+\infty)$, the norm $\| \A_{n} \|_{4}$ is bounded. Hence it follows that $a_n\ra 0$. For what concerns $b_{n}$, by applying again the Sobolev embedding it follows that the functions $|\A_{n}|^{2}$ are in $L^{p}(\R^{2})$ for all $p\in [1,+\infty)$ and with bounded norms. Hence in particular $|\A_{n}|^{2}$ are bounded in $L^{2}$. Hence, up to a sub-sequence they converge weakly in $L^{2}$ to $|\A|^{2}$. Again by Theorem \ref{compatti}, the function $u^{2}$ is in $L^{2}$, hence $b_n\ra 0$.

Analogously, to prove the second convergence in (\ref{convdeb})
$$
\left| \int \left( \nabla \theta \left(\A u^2-\A_n u_n^2 \right) \right) \, dx \right| \le a'_n+b'_n
$$
with 
$$
a'_n = \int \frac{1}{r} |\A_{n}| |u^2-u_n^2|\,dx
$$ 
$$
b'_n = \int \frac{1}{r} u^2\, |\A - \A_n| \,dx.
$$
Using the Schwarz inequality
\begin{eqnarray*}
b'_n &\le& \left(\int |\A-\A_n|^2 u^2 \, dx \right)^{1/2} \left(\int \frac{u^2}{r^2}\,dx\right)^{1/2}\\
&\le& \|u\|_{\H_r} \left(\int|\A-\A_n|^2 u^2\,dx\right)^{1/2}
\end{eqnarray*}
and we can apply the same argument as before to the functions $|\A-\A_n|^2$ to obtain, up to a sub-sequence, the weak convergence in $L^{2}$. Hence $b'_{n}$ is vanishing.

It remains to prove that $a'_n\ra0$. From this the second convergence in \eqref{convdeb} follows and the proof is completed.

It results $a'_n\le c'_n+d'_n$ where
$$c'_n=\int\frac{1}{r}|\A_{n}||u||u-u_n|\,dx \quad \quad d'_n=
  \int\frac{1}{r}|\A_{n}||u_n||u-u_n|\,dx.$$
Now we have
\begin{eqnarray*}
    c'_n&\le&\left(\int\left(\frac{|\A_{n}||u|}{r}\right)^{3/2}\,dx\right)^{2/3}
             \left(\int|u_n-u|^{3}\,dx\right)^{1/3}\\
         &\le& \left(\left[\int\left(|\A_{n}|^{3/2}\frac{|u|^{1/2}}{r^{1/2}}\right)^2\,dx\right]^{1/2}
          \left[\int\frac{u^2}{r^2}\,dx\right]^{1/2}\right)^{2/3}\|u_n-u\|_3\\
          &\le& \left(\int |\A_{n}|^{3}\frac{u}{r}\,dx\right)^{1/3}\|u\|_{\H_r}^{2/3}\|u_n-u\|_3\\
          &\le& \|\A_{n} \|_6 \|u\|_{\H_r}^{1/3}\|u\|_{\H_r}^{2/3}\|u_n-u\|_3\\
          &=&\|\A_{n} \|_6 \|u\|_{\H_r}\|u_n-u\|_3\ra0
\end{eqnarray*}
by Theorem \ref{compatti} and because the norms $\| \A_{n} \|_{H^{1}}$ are bounded. Similarly 
$$
d'_n\le \|\A_{n}\|_6\|u_n\|_{\H_r}\|u_n-u\|_3\ra0
$$ 
which proves that $a'_n\ra0.$
\end{proof}

The next proposition establishes a geometrical property of $J_\e$ which enables us to
deduce a sequence of ``quasi-solutions'' i.e. a Palais-Smale sequence (PS for short).
\begin{proposition}\label{MP}
The functional $J_\varepsilon$ has the Mountain Pass geometry on $V$.
\end{proposition}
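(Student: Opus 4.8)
The plan is to verify the two standard geometric conditions of the Mountain Pass Theorem (in the version without the Palais--Smale condition, which is checked separately): first that the origin is a strict local minimum, more precisely that there exist $\rho,\alpha>0$ with $J_\e(u,\A)\ge\alpha$ whenever $\|(u,\A)\|_V=\rho$; and second that there exists a point $(\bar u,\bar\A)\in V$ with $\|(\bar u,\bar\A)\|_V>\rho$ and $J_\e(\bar u,\bar\A)\le 0$ (note $J_\e(0,0)=0$ since $W(0)=0$). I would write $J_\e$ using the splitting already displayed in the proof of Proposition \ref{wls},
$$
J_\e(u,\A)=\tfrac12\|u\|_{\H_r}^2+\tfrac12\|\nabla\A\|_2^2+\tfrac\e2\|\A\|_2^2+\tfrac12\int|\A|^2u^2\,dx-k\int\nabla\theta\,\A\,u^2\,dx-\int R(u)\,dx,
$$
and note that $\tfrac12\|u\|_{\H_r}^2+\tfrac12\|\nabla\A\|_2^2+\tfrac\e2\|\A\|_2^2\ge c\,\|(u,\A)\|_V^2$ for a constant $c=c(\e)>0$, since these are exactly the quadratic (Hilbert-norm) terms of the functional.

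For the local minimum at $0$, the quartic term $\tfrac12\int|\A|^2u^2$ is nonnegative and may be dropped. The two remaining terms must be controlled by the quadratic part. For $\int R(u)\,dx$ I would use $|R(s)|\le c s^p$ with $p>2$ together with the radial Sobolev embedding (Theorem \ref{compatti}) to get $\int R(u)\,dx\le c\|u\|_p^p\le C\|u\|_{\H_r}^p$, which is $o(\|(u,\A)\|_V^2)$ near the origin. The cross term is the more delicate one: I would bound
$$
\Bigl|k\int\nabla\theta\,\A\,u^2\,dx\Bigr|\le |k|\int\frac1r|\A|u^2\,dx\le |k|\Bigl(\int\frac{u^2}{r^2}\,dx\Bigr)^{1/2}\Bigl(\int|\A|^2u^2\,dx\Bigr)^{1/2}\le |k|\,\|u\|_{\H_r}\Bigl(\int|\A|^2u^2\,dx\Bigr)^{1/2},
$$
and then estimate $\int|\A|^2u^2\,dx\le\|\A\|_4^2\|u\|_4^2\le C\|\A\|_{H^1}^2\|u\|_{\H_r}^2$ by Sobolev embeddings (one for $\A$, the radial one for $u$). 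Altogether the cross term is bounded by $C\|u\|_{\H_r}^2\|\A\|_{H^1}\le C\|(u,\A)\|_V^3$, again of higher order. Hence $J_\e(u,\A)\ge c\|(u,\A)\|_V^2-C\|(u,\A)\|_V^3-C\|(u,\A)\|_V^p$, which is $\ge\alpha>0$ on a small sphere $\|(u,\A)\|_V=\rho$.

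For the second condition I would fix a nonzero $u\in\H_r$ (e.g.\ a smooth radial bump) and set $\A=0$, so that along the ray $t\mapsto(tu,0)$ one has
$$
J_\e(tu,0)=\tfrac{t^2}{2}\|u\|_{\H_r}^2-\int R(tu)\,dx.
$$
From the superlinearity hypothesis $sR'(s)\ge pR(s)>0$ with $p>2$ one derives by integrating the differential inequality that $R(ts)\ge t^p R(s)$ for $t\ge1$ and $s>0$ (the standard Ambrosetti--Rabinowitz consequence), hence $\int R(tu)\,dx\ge t^p\int R(u)\,dx$ with $\int R(u)\,dx>0$; therefore $J_\e(tu,0)\le\tfrac{t^2}2\|u\|_{\H_r}^2-t^p\int R(u)\,dx\to-\infty$ as $t\to+\infty$. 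Choosing $t$ large enough gives the desired point $(\bar u,\bar\A)=(tu,0)$ with norm exceeding $\rho$ and $J_\e(\bar u,\bar\A)\le 0$. The main obstacle is the first condition, specifically showing the indefinite cross term $k\int\nabla\theta\,\A\,u^2$ does not destroy coercivity near $0$; this is handled cleanly by the chain of Hölder/Sobolev estimates above, which shows it is cubic (hence negligible at small scale) rather than quadratic.
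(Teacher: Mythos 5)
Your proof is correct and follows essentially the same strategy as the paper: split $J_\e$ into its quadratic (Hilbert-norm) part plus remainder terms, show the remainder is superquadratic near the origin, and exhibit a ray $(t\bar u,\mathbf 0)$ along which $J_\e\to-\infty$ via the Ambrosetti--Rabinowitz condition on $R$. The only real difference is how the indefinite cross term $k\int\nabla\theta\,\A\,u^2\,dx$ is controlled: the paper absorbs it with a weighted Young inequality, ending up with quartic remainders $\|\A\|_{H^1}^4+\|u\|_{\H_r}^4$, while you use Cauchy--Schwarz against the weight $u/r$ to get the cubic bound $C\|u\|_{\H_r}^2\|\A\|_{H^1}$ --- both are superquadratic, so both yield the strict local minimum at the origin.
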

\begin{proof}
By Mountain Pass geometry we mean that there exist two constants $\rho, \alpha>0$ and a point $(\bar u,\bar\A)$ with $\|(\bar u,\bar\A)\|_V>\rho$ such that
\begin{align}
  &J_\e(0,\mathbf 0)=0 \nonumber \\
  &J_\e(u,\A)\ge \alpha \quad\text{for}\quad\|(u,\A)\|_V=\rho ,&  \label{MP1} \\	
  &J_\e(\bar u,\bar\A)\le 0 ,&  \label{MP2}
\end{align}
see \cite{ar}. It is worth noticing that $\bar u$ can be chosen independently on $\e$.

Let us first compute
\begin{eqnarray*}
\int |k\nabla\theta-\A|^2 u^2\,dx  &\ge& \int \left(|\A|^2-\frac{2|k\A|}{r}+\frac{1}{r^2}\right)u^2\,dx\\
&  =  & \int\left[|\A|^2-2\left(|k\A|\sqrt{2}\frac{1}{r\sqrt{2}}\right)+\frac{1}{r^2}\right]u^2\,dx \\
& \ge & \int\left[|\A|^2-2|k\A|^2-\frac{1}{2r^2}+\frac{1}{r^2}\right]u^2\,dx \\
&  =  & (1-2k^2)\int|\A|^2 u^2\,dx+\frac{1}{2}\int\frac{u^2}{r^2}\,dx\\
& \ge & \frac{1}{2}\int\frac{u^2}{r^2}\,dx+(1-2k^2)\|\A\|^{2}_{6}\|u\|^{2}_{3} \\
& \ge & \frac{1}{2}\int\frac{u^2}{r^2}\,dx-\frac{2k^2-1}{2}\|\A\|^{4}_{6}-\frac{2k^2-1}{2}\|u\|^{4}_{3}\\
& \ge & \frac{1}{2}\int\frac{u^2}{r^2}\,dx-c_1\|\A\|^{4}_{H^1}-c_2\|u\|^{4}_{\H_r}.
\end{eqnarray*}
So we have
\begin{eqnarray*}
J_\e (u,\A) & \ge & \frac{1}{2}\int|\nabla u|^2\,dx+\frac{1}{2}\int \frac{u^2}{r^2}\,dx
                    -c_1\|\A\|^{4}_{H^1}-c_2\|u\|^{4}_{\H_r}\\
            &  +  & \frac{1}{2}\|\nabla\A\|^2_2+\frac{\e}{2}\int |\A|^2\,dx+\int W(u)\,dx\\
            & \ge & \|u\|^{2}_{\H_{r}} \left( \frac{1}{2} - c_2\|u\|^{2}_{\H_r} \right) + \|\A\|^{2}_{H^1} \left( \frac{\e}{2} - c_1\|\A\|^{2}_{H^1} \right)\\ 
            &  -   & \int R(u)\,dx.
\end{eqnarray*}
By the assumptions on $R(s)$ 
$$
\int|R(u)|\,dx\le c\|u\|_p^p\le c'\|u\|_{\H_r}^p
$$ 
and hence $J_\e$ has a strict local minimum in $\left(0,\mathbf{0}\right)$ and (\ref{MP1}) is satisfied.

Finally we notice that, by using again the assumptions on $R$, for any $u_0\in\H_r$ it holds
$$
\lim_{t\rightarrow+\infty} J_\e(t u_0,\mathbf 0)=-\infty.
$$
Concluding, there exists a point $(\bar{u},\mathbf{0})$ such that $J_\e(\bar{u},\mathbf{0})<0$, hence (\ref{MP2}).
\end{proof}

By the $C^{1}$ regularity of the functional $J_{\e}$ and Proposition \ref{MP}, applying a weak form of the Mountain Pass Theorem we deduce the existence of a PS sequence for $J_\e$ at some level $c_\e> 0$. That is there exists a sequence $(u_n,\A_n)\subset V$ such that
$$
J_\e(u_n,\A_n)\ra c_\e \quad\text{and}\quad dJ_{\e}(u_n,\A_n)\ra 0 \;\;\text{in}\;\; V'.
$$
It is understood that the sequence $(u_n,\A_n)$ also depend on $\e$, but for simplicity we omit this dependence here and in the next two results.

The following lemma is fundamental.
\begin{lemma}
Let $(u_n,\A_n)\subset V$ be a PS sequence for the functional $J_\e$ at level $c_\e$. Then it is bounded.
\end{lemma}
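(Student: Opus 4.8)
The plan is to run the classical boundedness argument for a Mountain Pass Palais--Smale sequence, testing the differential of $J_\e$ against a suitable multiple of the sequence itself. The obstacle is that $J_\e$ does not satisfy a textbook Ambrosetti--Rabinowitz inequality: the coupling term $\frac12\int|k\nabla\theta-\A|^2u^2\,dx$ mixes the two unknowns, so testing $dJ_\e(u_n,\A_n)$ against $(u_n,\A_n)$ would produce the quantity $\int|\A_n|^2u_n^2\,dx$, which has no sign and is not controlled by the quadratic part of $J_\e$. The way around this is to test only in the $u$-variable, against $\frac1p u_n$, exploiting that $p>2$.

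Concretely, set $\eta_n:=\|dJ_\e(u_n,\A_n)\|_{V'}\ra 0$ and $N_n:=\|u_n\|_{\H_r}$. Since $(u_n,\mathbf 0)\in V$ we have $|dJ_\e(u_n,\A_n)[u_n,\mathbf 0]|\le\eta_n N_n$, and I would estimate from below
$$
J_\e(u_n,\A_n)-\frac1p\,dJ_\e(u_n,\A_n)[u_n,\mathbf 0].
$$
In this combination $\frac12\int|\nabla u_n|^2\,dx$ and $\frac12\int|k\nabla\theta-\A_n|^2u_n^2\,dx$ survive with the positive coefficient $\beta:=\frac12-\frac1p$; the potential part equals $\int\big[W(u_n)-\frac1p W'(u_n)u_n\big]\,dx$, which by the assumption $sR'(s)\ge pR(s)$ (valid for every $s$ once $R$ is extended by $0$ on $(-\infty,0]$, as in the Remark) is bounded below by $\beta\int u_n^2\,dx$; and $\frac12\int|\nabla\A_n|^2\,dx+\frac\e2\int|\A_n|^2\,dx$ are untouched. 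Using $J_\e(u_n,\A_n)=c_\e+o(1)$ one gets, for $n$ large,
$$
\beta\|u_n\|_{H^1}^2+\beta\int|k\nabla\theta-\A_n|^2u_n^2\,dx+\frac12\|\nabla\A_n\|_2^2+\frac\e2\|\A_n\|_2^2\ \le\ c_\e+1+\eta_n N_n,
$$
so, since every summand on the left is nonnegative and $\e$ is fixed, each of $\|u_n\|_{H^1}^2$, $\|\A_n\|_{H^1}^2$ and $\int|k\nabla\theta-\A_n|^2u_n^2\,dx$ is $\le C(1+\eta_n N_n)$.

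The only part of $N_n^2$ not yet controlled is the weighted term $\int u_n^2/r^2\,dx$. To reach it I would use the elementary pointwise bound coming from $k\nabla\theta=(k\nabla\theta-\A_n)+\A_n$ and $|k\nabla\theta|=|k|/r\ge 1/r$, namely $1/r^2\le 2|k\nabla\theta-\A_n|^2+2|\A_n|^2$, whence
$$
\int\frac{u_n^2}{r^2}\,dx\ \le\ 2\int|k\nabla\theta-\A_n|^2u_n^2\,dx+2\int|\A_n|^2u_n^2\,dx.
$$
The first integral is already controlled; for the second, the Sobolev embedding $H^1(\R^2)\hookrightarrow L^4(\R^2)$ gives $\int|\A_n|^2u_n^2\,dx\le C\|\A_n\|_{H^1}^2\|u_n\|_{H^1}^2\le C(1+\eta_n N_n)^2$. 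Adding $\|u_n\|_{H^1}^2\le C(1+\eta_n N_n)$, one obtains an inequality of the shape $N_n^2\le K_1+K_2\eta_n N_n+K_3\eta_n^2 N_n^2$ with the $K_i$ independent of $n$; since $\eta_n\ra 0$, for $n$ large $K_3\eta_n^2\le\frac12$ and $K_2\eta_n\le 1$, so $\frac12 N_n^2\le K_1+N_n$, which bounds $N_n$. The previous estimate then bounds $\|\A_n\|_{H^1}$ as well, so $(u_n,\A_n)$ is bounded in $V$.

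The delicate point is precisely the reappearance of the ``bad'' quantity $\int|\A_n|^2u_n^2\,dx$ in the last step: it is quartic in norms that are themselves only bounded by $C(1+\eta_n N_n)$, hence a priori comparable to $N_n^2$; what saves the argument is that each testing extracts a factor $\eta_n$, so this quantity re-enters as $K_3\eta_n^2 N_n^2$ with a coefficient that tends to $0$ and can be absorbed on the left. It is also essential to use the multiplier $\frac1p$ with $p>2$ rather than $\frac12$: only then does $\|u_n\|_{H^1}^2$ — and not just $\|\A_n\|_{H^1}^2$ — survive on the left of the first estimate, which is what makes it possible to bound $\int|\A_n|^2u_n^2\,dx$ and thus $\int u_n^2/r^2\,dx$.
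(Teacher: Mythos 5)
Your proof is correct and follows essentially the same strategy as the paper: both test $dJ_\e$ only in the $u$-variable with the multiplier $1/p$, use $sR'(s)\ge pR(s)$ to get $W(u_n)-\tfrac1p W'(u_n)u_n\ge\tfrac{p-2}{2p}u_n^2$, deduce $H^1$ bounds on $u_n$ and $\A_n$, and then recover $\int u_n^2/r^2\,dx$ from the coupling term together with $\int|\A_n|^2u_n^2\,dx\le\|\A_n\|_4^2\|u_n\|_4^2$ and the two-dimensional Sobolev embedding. The only (cosmetic) difference is in the last step --- the paper returns to the bound $J_\e(u_n,\A_n)\le M$ and controls the cross term $k\int\nabla\theta\,\A_n u_n^2\,dx$ by Young's inequality, whereas you use the pointwise bound $1/r^2\le 2|k\nabla\theta-\A_n|^2+2|\A_n|^2$ --- and your explicit bookkeeping of the term $\eta_n N_n$ (absorbing it via $\eta_n\to 0$) is, if anything, more careful than the paper's.
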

\begin{proof}
If $(u_n,\A_n)\subset V$ is a PS sequence, by definition
\begin{align}
    \partial_u J_\e(u_n,\A_n) [u_n]= \lambda_n[u_n] \label{ps1}\\
    J_\e(u_n,\A_n)=c_{\e,n}\ra c_\e \label{ps2}
\end{align}
where $\lambda_n\ra0$ in $(\H)'$.

Evaluating 
$$
J_\e(u_n,\A_n)-\frac{1}{p} \partial_u J_\e(u_n,\A_n)[u_n] =c_{\e,n}-\frac{1}{p} \lambda_n[u_n]
$$
we find
\begin{multline}\label{stima}
    \frac{p-2}{2p}\int|\nabla u_n|^2\,dx+\frac{p-2}{2p}\int|k\nabla\theta-\A_n|^2u_n^2\,dx
    +\frac{1}{2}\int|\nabla\A_n|^2\,dx\\+ \frac{\e}{2}\int|\A_n|^2\,dx
    +\int \left(W(u_n)-\frac{1}{p}W'(u_n)u_n\right)dx=c_{\e,n}-\frac{1}{p} \lambda_n[u_n].
\end{multline} 
Recalling the assumptions on $W$, we get
$$W(u_n)-\frac{1}{p}W'(u_n)u_n=\frac{p-2}{2p}u_n^2+\frac{1}{p}R'(u_n)u_n-R(u_n)
  \ge\frac{p-2}{2p}u_n^2$$
hence \eqref{stima} implies
\begin{multline}
    \frac{p-2}{2p}\int|\nabla u_n|^2\,dx+\frac{p-2}{2p}\int|k\nabla\theta-\A_n|^2u_n^2\,dx
    +\frac{1}{2}\int|\nabla\A_n|^2\,dx   \nonumber\\
    +\frac{\e}{2}\int|\A_n|^2\,dx +\frac{p-2}{2p}\int u_n^2\,dx\le c_{\e,n}
    -\frac{1}{p} \lambda_n[u_n]
\end{multline}
hence
\begin{equation} \label{pslimit}
    \frac{p-2}{2p}\|u_n\|_{H^1}^2+\frac{1}{2}\|\nabla\A_n\|_2^2+\frac{\e}{2}\|\A_n\|_2^2
    \le c_{\e,n}+\frac{1}{p}\|\lambda_n\|_{(\H)'}\|u_n\|_{H^1}.
\end{equation}
By \eqref{pslimit} we deduce that $\{\|u_n\|_{H^1}\}$ and $\{\|\A_n\|_{H^1}\}$ are bounded.
In particular there exists a constant $C>0$ such that $\int R(u_n)\,dx\le C.$

Finally we have
\begin{eqnarray*}
  M &\ge& J_\e(u_n,\A_n)\ge\frac{1}{2}\|u_n\|_{\H_r}^2-k\int\nabla\theta\A_n u_n^2\,dx-\int R(u_n)\,dx\\
    &\ge& \frac{1}{2}\|u_n\|_{\H_r}^2-|k|\int|\A_n|\frac{u_n^2}{r}\,dx-C\\
    &\ge& \frac{1}{2}\|u_n\|_{\H_r}^2-\frac{|k|}{2}\int\left(4|\A_n|^2+\frac{1}{4r^2}\right)u_n^2\,dx-C\\
    &\ge& \frac{1}{2}\|u_n\|_{\H_r}^2-2|k|\int|\A_n|^2u_n^2\,dx-\frac{1}{8}\int\frac{u_n^2}{r^2}\,dx-C\\
    &\ge& \frac{1}{2}\|u_n\|_{\H_r}^2-2|k|\left\||\A_n|^2\right\|_2\|u_n^2\|_2-\frac{1}{8}\|u_n\|_{\H_r}^2-C\\
    & = & \frac{3}{8}\|u_n\|_{\H_r}^2-2|k|\|\A_n\|_4^2\|u_n\|_4^2-C
 \end{eqnarray*}
which shows that $\{u_n\}$ is bounded in $\H_r$ since $\{\|\A_n\|_4\}$ and $\{\|u_n\|_4\}$ are bounded by Sobolev embedding theorems.
\end{proof}
The next step is to prove that any PS sequence is bounded away from zero.
\begin{proposition} \label{away}
If $(u_{n},\A_{n})$ is a PS sequence for $J_\e$ at level $c_\e>0$ then for some $c>0$ 
$$
\|u_n\|^p_p\ge c>0.
$$
\end{proposition}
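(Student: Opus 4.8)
The plan is to argue by contradiction. Suppose the conclusion fails; then, up to a subsequence which we do not relabel, $\|u_n\|_p\ra 0$, and we must contradict $J_\e(u_n,\A_n)\ra c_\e>0$. Two preliminary facts. First, by the growth assumption $|R(s)|\le cs^p$ we have $\int R(u_n)\,dx\le c\|u_n\|_p^p\ra 0$. Second, since $\{(u_n,\A_n)\}$ is bounded in $V$ by the preceding lemma, interpolation together with the Sobolev embedding $H^1(\R^2)\subset L^q(\R^2)$ gives $\|u_n\|_q\ra 0$ for every $q\in(2,+\infty)$; in particular $\|u_n\|_4\ra 0$. Note that $\|u_n\|_p\ra 0$ does \emph{not} by itself give $\|u_n\|_2\ra 0$ (there is no compactness at the exponent $2$), which is why the Palais--Smale condition will be needed below.

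Step 1 (the vector potential dies). Since $dJ_\e(u_n,\A_n)\ra 0$ in $V'$ and $\|\A_n\|_{H^1}$ is bounded, testing with $(\mathbf 0,\A_n)$ yields
\[
\|\nabla\A_n\|_2^2+\e\|\A_n\|_2^2+\int|\A_n|^2u_n^2\,dx-k\int\nabla\theta\,\A_n\,u_n^2\,dx=o(1).
\]
Exactly as in the proof of Proposition \ref{wls}, by the Schwarz inequality, Hölder and the Sobolev embedding,
\[
\left|k\int\nabla\theta\,\A_n\,u_n^2\,dx\right|\le|k|\left(\int\frac{u_n^2}{r^2}\,dx\right)^{1/2}\left(\int|\A_n|^2u_n^2\,dx\right)^{1/2}\le|k|\,\|u_n\|_{\H_r}\|\A_n\|_4\|u_n\|_4=o(1),
\]
because $\|u_n\|_{\H_r}$ and $\|\A_n\|_4$ are bounded while $\|u_n\|_4\ra 0$. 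Since $\int|\A_n|^2u_n^2\,dx\ge 0$, we conclude $\|\nabla\A_n\|_2^2+\e\|\A_n\|_2^2\ra 0$, and hence $\A_n\ra \mathbf 0$ in $(H^1)^3$ (for $\e>0$ fixed).

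Step 2 ($u_n$ dies, and the contradiction). Testing the Palais--Smale condition with $(u_n,\mathbf 0)$ and recalling $W'(s)=s-R'(s)$,
\[
\int|\nabla u_n|^2\,dx+\int|k\nabla\theta-\A_n|^2u_n^2\,dx+\int u_n^2\,dx-\int R'(u_n)u_n\,dx=o(1).
\]
By the growth conditions on the nonlinear term, $\int R'(u_n)u_n\,dx\le C\|u_n\|_p^p$, and since $\int|k\nabla\theta-\A_n|^2u_n^2\,dx\ge0$ we get $\|u_n\|_{H^1}^2\le C\|u_n\|_p^p+o(1)\ra 0$ and $\int|k\nabla\theta-\A_n|^2u_n^2\,dx\le C\|u_n\|_p^p+o(1)\ra0$; expanding the last integral, bounding the cross term as in Step 1 and using $\int|\A_n|^2u_n^2\,dx\le\|\A_n\|_4^2\|u_n\|_4^2\ra0$ gives $k^2\int u_n^2/r^2\,dx\ra0$, hence $\|u_n\|_{\H_r}\ra0$. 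Using the form (\ref{Je}) of the functional,
\[
J_\e(u_n,\A_n)=\frac12\|u_n\|_{H^1}^2+\frac12\left(\|\nabla\A_n\|_2^2+\e\|\A_n\|_2^2\right)+\frac12\int|k\nabla\theta-\A_n|^2u_n^2\,dx-\int R(u_n)\,dx,
\]
and every term on the right tends to $0$ (by Step 1, the above, and $\int R(u_n)\,dx\ra0$). Thus $J_\e(u_n,\A_n)\ra0$, contradicting $c_\e>0$ and proving the proposition.

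The main obstacle is the estimate $\int R'(u_n)u_n\,dx\le C\|u_n\|_p^p$ used in Step 2: it is precisely here that the superquadratic, subcritical growth of $R$ must be exploited, and it is indispensable since the radial compactness (Theorem \ref{compatti}) furnishes strong convergence $u_n\ra0$ only in $L^q$ for $q>2$ — a quantity merely quadratic in $u_n$ could not be absorbed this way. The same remark governs Step 1: it is the $L^q$-convergence with $q>2$, together with the penalization term $\e\|\A_n\|_2^2$, that forces $\A_n$ to vanish, and dropping $\e$ would break the argument, which is exactly why the whole scheme is carried out on the perturbed problem.
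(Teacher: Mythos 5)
Your proof is correct and follows essentially the same route as the paper's: argue by contradiction from $\|u_n\|_p\to 0$, use the Palais--Smale condition tested against $(u_n,\mathbf 0)$ and $(\mathbf 0,\A_n)$ to show every term of $J_\e(u_n,\A_n)$ vanishes, and contradict $c_\e>0$. The only (harmless) difference is the order of the two steps: the paper first kills $\|u_n\|_{\H_r}$ and $\int|k\nabla\theta-\A_n|^2u_n^2\,dx$ and then uses the latter to control the cross term in the $\A$-equation, whereas you kill $\A_n$ first via the interpolation observation $\|u_n\|_4\to 0$.
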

\begin{proof}
Let $\{(u_n,\A_n)\}$ be a bounded PS sequence satisfying \eqref{ps1} and \eqref{ps2}.
Since $\{\|u_n\|_{\H_r}\}$ is bounded,
\begin{equation} \label{away1}
    \|\nabla u_n\|_{2}^2+\int|k\nabla\theta-\A_n|^2u_n^2\,dx+\int W'(u_n)u_n\,dx
    =\lambda_n[u_n]\ra0.
\end{equation}
Hence
\begin{equation} \label{away2}
\begin{array}{c}
\|\nabla u_n\|_{2}^2+\int|k\nabla\theta-\A_n|^2u_n^2\,dx+\frac{1}{2}\int u_n^2\,dx = \\[0.3cm]
= \lambda_n[u_n]+\int R'(u_n)u_n\,dx \le \lambda_n[u_n]+\|u_n\|_p^p 
\end{array}
\end{equation}
from which it follows
\begin{equation} \label{away4}
    \|u_n\|_{H^1_r}^2\le\lambda_n[u_n]+\|u_n\|_p^p.
\end{equation}
We argue by contradiction. If $\|u_n\|_p\ra0,$ using \eqref{away1} and \eqref{away4} we obtain
\begin{equation} \label{away6}
    \|u_n\|_{H^1_r}^2\ra0 \quad\text{and}\quad \int R(u_n)\,dx\ra0
\end{equation}
and coming back to \eqref{away2}
\begin{equation} \label{away8}
    \int|k\nabla\theta-\A_n|^2u_n^2\,dx\ra0.
\end{equation}
On the other hand since $\partial_{\A} J_{\e} (u_{n},\A_{n}) \ra 0$ it holds
\begin{equation*}
    -\Delta\A_n+\e\A_n-\left(k\nabla\theta-\A_n\right)u_n^2=\delta_n\ra0 \quad \text{in}\quad ((H^1)^3)'
\end{equation*}
and, since $\{\|\A_n\|_{H^1}\}$ is bounded,
\begin{equation} \label{away12}
    \|\nabla\A_n\|_2^2+\e\|\A_n\|_2^2-\int\left(k\nabla\theta-\A_n\right)\A_n u_n^2\,dx
    =\delta_n[\A_n]\ra0.
\end{equation}
Classical estimates give
$$
\begin{array}{c}
    \left|\int\left(k\nabla\theta-\A_n\right)\A_n u_n^2\,dx\right| \le \\[0.3cm]
    \le \left(\int|k\nabla\theta-\A_n|^2u_n^2\,dx\right)^{1/2}\left(\int|\A_n|^2 u_n^2\,dx\right)^{1/2} \ra 0
\end{array}
$$
by \eqref{away8} and since $\int|\A_n|^2 u_n^2\,dx$ is bounded by the Schwartz inequality. Therefore by \eqref{away12} we get
\begin{equation} \label{away14}
    \|\nabla\A_n\|_2^2+\e\|\A_n\|_2^2\ra0.
\end{equation}
Finally, by \eqref{away6}, \eqref{away8} and \eqref{away14}
\begin{multline*}
    J_\e(u_n,\A_n)=\frac{1}{2}\|u_n\|_{H^1_r}^2+\frac{1}{2}\int|k\nabla\theta-\A_n|^2 u_n^2\,dx \\
    +\frac{1}{2}\|\nabla\A_n\|_2^2+\frac{\e}{2}\|\A_n\|_2^2-\int R(u_n)\,dx\ra0.
\end{multline*}
This is a contradiction since $J_\e(u_n,\A_n)\ra c_\e>0.$
\end{proof}

By the previous results, for every $\e\in(0,1)$ there exists $(u_{n,\e},\A_{n,\e})$, a bounded PS sequence for $J_\e$ at level $c_\e$. So we can extract a weakly convergent sub-sequence, denoted again with $(u_{n,\e},\A_{n,\e})$,  to a certain $(u_\e,\A_\e)\in V$. We know that $u_\e\ne0$ (Proposition \ref{away}) and  $J_\e(u_\e,\A_\e)\le c_\e$ (Proposition \ref{wls}). We have proved that
\begin{eqnarray*}
    &u_{n,\e}\wra u_\e\ne 0 \quad\text{in}\quad \H_r& \\
    &\A_{n,\e}\wra \A_\e \quad\text{in}\quad {\cal A}&\\
    &J_\e(u_\e,\A_\e)\le c_\e.&
\end{eqnarray*}
As stated in the next proposition, the weak limit $(u_\e,\A_\e)$ is a solution of the perturbed problem with fixed $\e$.
\begin{proposition} \label{e-soluz}
The couple $(u_\e,\A_\e)$ is a weak solution of (\ref{Pe}), i.e. it satisfies (\ref{wsol1})
and (\ref{wsol2}).
\end{proposition}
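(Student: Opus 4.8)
The plan is to establish \eqref{wsol1} and \eqref{wsol2} by passing to the limit, term by term, in the relations $\partial_u J_\e(u_{n,\e},\A_{n,\e})[v]\ra 0$ and $\partial_\A J_\e(u_{n,\e},\A_{n,\e})[\mathbf V]\ra 0$, which hold for every $(v,\mathbf V)\in V$ because $(u_{n,\e},\A_{n,\e})$ is a Palais--Smale sequence for $J_\e$ in $V'$. It is convenient first to derive the two identities testing only against radial functions supported away from the origin, and then to remove this restriction: the radial functions in $C^\infty_0(\R^2\setminus\{0\})$ are dense in $\H_r$ (a cut--off argument near the origin using $\int w^2/r^2<+\infty$ for $w\in\H_r$), while ${\cal A}_0$ is dense in ${\cal A}$ by definition. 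The extension is legitimate because the linear functionals of $v$ (resp.\ of $\mathbf V$) appearing on the left of \eqref{wsol1} (resp.\ \eqref{wsol2}), evaluated at the weak limit $(u_\e,\A_\e)$, are bounded on $\H_r$ (resp.\ on ${\cal A}$); this follows from $u_\e\in\H_r$, from $\int|k\nabla\theta-\A_\e|^2u_\e^2<+\infty$ and $\|\A_\e\|_4<+\infty$, from the Sobolev embedding $H^1\hookrightarrow L^q$, and from the growth hypotheses on $W$, exactly as in the estimates already carried out in Section \ref{sec:FunctionalFramework} and in Proposition \ref{wls}.

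For \eqref{wsol1}, fix a radial $v\in C^\infty_0(\R^2\setminus\{0\})\subset\H_r$ and let $K=\mathrm{supp}\,v$, which is bounded and has positive distance from the origin. The terms $\int\nabla u_{n,\e}\nabla v\,dx$ and $\int u_{n,\e}v\,dx$ converge to the same expressions with $u_\e$ by the weak $L^2$ convergence of $u_{n,\e}$ and of $\nabla u_{n,\e}$. By Theorem \ref{compatti}, $u_{n,\e}\ra u_\e$ strongly in $L^q(\R^2)$ for every $q\in(2,+\infty)$, and by the Rellich theorem $\A_{n,\e}\ra\A_\e$ strongly in $L^q(K)$; since $\nabla\theta$ is bounded on $K$, these facts together with the growth bounds on $W$ yield, via dominated convergence on $K$, the convergences $\int R'(u_{n,\e})v\,dx\ra\int R'(u_\e)v\,dx$ and $\int|k\nabla\theta-\A_{n,\e}|^2u_{n,\e}v\,dx\ra\int|k\nabla\theta-\A_\e|^2u_\e v\,dx$. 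Hence \eqref{wsol1} holds for every such $v$, and then, by the density and boundedness recalled above, for all $v\in\H_r$.

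For \eqref{wsol2}, fix $\mathbf V=b\nabla\theta\in{\cal A}_0$, supported on a compact $K\subset\R^2\setminus\{0\}$. The terms $\int\nabla\A_{n,\e}\cdot\nabla\mathbf V\,dx$ and $\e\int\A_{n,\e}\mathbf V\,dx$ pass to the limit by the weak $L^2$ convergence of $\A_{n,\e}$ and of $\nabla\A_{n,\e}$, while $\int(\A_{n,\e}-k\nabla\theta)\mathbf V u_{n,\e}^2\,dx\ra\int(\A_\e-k\nabla\theta)\mathbf V u_\e^2\,dx$ thanks to the strong $L^q(K)$ convergence of $u_{n,\e}$ and $\A_{n,\e}$ and the boundedness of $\nabla\theta$ on $K$. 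Thus \eqref{wsol2} holds on ${\cal A}_0$, hence on all of ${\cal A}$ by density and boundedness. Combining the two parts, $(u_\e,\A_\e)$ satisfies \eqref{wsol1} and \eqref{wsol2}, i.e.\ it is a weak solution of \eqref{Pe}.

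I expect the main obstacle to be precisely the passage to the limit in the terms containing $\nabla\theta$: its $1/r$ singularity at the origin, together with the absence of a compact Sobolev embedding for the vector potential $\A$ on the whole plane, blocks a direct global argument and forces the two--step scheme above. Everything else being routine weak/strong convergence, the argument rests on the density of radial functions vanishing near the origin in $\H_r$ (and of ${\cal A}_0$ in ${\cal A}$) and on the boundedness of the limiting linear functionals, which are the two points that deserve careful checking.
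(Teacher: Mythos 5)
Your proof is correct, and while it shares the paper's overall strategy --- passing to the limit term by term in $dJ_\e(u_{n,\e},\A_{n,\e})\to 0$ --- the technical route is genuinely different. The paper tests directly against arbitrary $v\in\H_r$ and $\mathbf V\in{\cal A}$ and disposes of the singular terms by reusing the global estimates of Proposition \ref{wls} (the weighted Cauchy--Schwarz with $\int u^2/r^2\,dx$, the weak $L^2$ convergence of $|\A_n|^2$, the bound $\|\A_n\|_6\|u\|_{\H_r}\|u_n-u\|_3$), now with the extra factor $v$ inserted. You instead localize first: on a compact set at positive distance from the origin $\nabla\theta$ is bounded and Rellich's theorem gives strong local convergence of $\A_{n,\e}$, so every term converges by elementary means; you then extend by density. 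That scheme trades the delicate global convergence estimates for two routine verifications, both of which you correctly identify and which do hold: the density of radial $C^\infty_0(\R^2\setminus\{0\})$ functions in $\H_r$ (your cut-off argument works precisely because $\int w^2/r^2\,dx<\infty$ for $w\in\H_r$, which kills the $\int w^2|\nabla\chi_n|^2\,dx$ term), and the continuity on $\H_r$ (resp.\ ${\cal A}$) of the limiting linear functionals, which follows from $\int|k\nabla\theta-\A_\e|^2u_\e^2\,dx<\infty$ together with the Sobolev bound on $\|\A_\e\|_4$ and the growth condition on $R'$. Your version is arguably more self-contained, since the paper's ``apply the same arguments as Proposition \ref{wls}'' silently requires checking that those arguments survive the insertion of the test function; the paper's version avoids any density or boundedness discussion.
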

\begin{proof}
Let $\e$ be fixed. Since $(u_{n,\e},\A_{n,\e})$ is a PS sequence for $J_\e$ we have
$$
-\Delta u_{n,\e}+|k\nabla\theta-\A_{n,\e}|^2u_{n,\e}+W'(u_{n,\e})= \lambda_{n,\e}\ra0 \quad\text{in}\quad(\H_r)'
$$
which evaluated on $v\in\H_r$ gives
\begin{multline}
    \int \nabla u_{n,\e}\nabla v\,dx+k^2\int \frac{u_{n,\e} v}{r^2}\,dx+\int u_{n,\e} v\,dx \nonumber\\
    -2k\int\nabla\theta\A_{n,\e} u_{n,\e} v\,dx-\int R(u_{n,\e})v\,dx= \lambda_{n,\e}[v]. \nonumber
\end{multline}
Applying the same arguments of the proof of Proposition \ref{wls}, letting $n\to \infty$ we find that $(u_\e,\A_\e)$ is a solution of the first equation in (\ref{Pe}), i.e. satisfies
(\ref{wsol1}) with $v\in\H_r$. Analogously
$$
-\Delta\A_{n,\e}+\e\A_{n,\e}-(k\nabla\theta-\A_{n,\e})u_{n,\e}^2= \delta_{n,\e}\ra0\quad\text{in}\quad((H^1)^3)'
$$
which evaluated on $\mathbf{V}\in {\cal A}$ and passing to the limit in $n$ gives
$$
\int \nabla\A_\e\cdot\nabla \mathbf{V}\,dx+\e\int\A_\e\mathbf{V}\,dx= \int (k\nabla\theta-\A_\e)\mathbf{V}u_\e^2\,dx
$$
so that $(u_\e,\A_\e)$ solves (\ref{wsol2}) with $\mathbf V\in {\cal A}$.
\end{proof}
\begin{remark}\label{Oss}
By Theorem \ref{natural} $(u_\e,\A_\e)$ satisfies also (\ref{eq1}) and (\ref{eq2}). 
\end{remark}

\subsection{...and now $\e\ra0$} \label{sec:NowERa0}

In this section all the limits are taken for $\e$ which tends to $0^+$.

As we have seen, for any $\e\in(0,1)$, $u_\e\ne0$. Actually we have the following
\begin{lemma}
There exists a positive constant, $C$ such that for every $\e\in(0,1)$
$$
0<C\le\|u_\e\|_{H^1_r}.
$$
\end{lemma}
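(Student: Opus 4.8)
The claim strengthens the fact, already established for each fixed $\e$ (Proposition \ref{away}), that $u_\e\neq 0$, to a bound whose constant does not depend on $\e$; keeping track of this uniformity is the whole point. The plan is to test the $u$--equation of the \emph{already obtained} weak solution $(u_\e,\A_\e)$ with $u_\e$ itself and to exploit that this equation carries no $\e$.

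First I would recall that, by Proposition \ref{e-soluz}, the couple $(u_\e,\A_\e)$ is a weak solution of \eqref{Pe}, hence it satisfies \eqref{wsol1}; since $(u_\e,\A_\e)\in V=\H_r\times{\cal A}$, the function $v=u_\e\in\H_r$ is an admissible test function (all the terms occurring below are finite, thanks to the $\H_r$--bound on $u_\e$, the classical Sobolev embedding $H^1(\R^2)\subset L^q(\R^2)$ for $q\in[2,+\infty)$, and the growth of $R$). Plugging $v=u_\e$ into \eqref{wsol1} and using $W'(s)=s-R'(s)$ yields
\[
\|\nabla u_\e\|_2^2+\|u_\e\|_2^2+\int|k\nabla\theta-\A_\e|^2u_\e^2\,dx=\int R'(u_\e)u_\e\,dx .
\]
The magnetic term on the left is nonnegative, so, writing $\|u_\e\|_{H^1_r}^2=\|\nabla u_\e\|_2^2+\|u_\e\|_2^2$, we may simply discard it and obtain $\|u_\e\|_{H^1_r}^2\le\int R'(u_\e)u_\e\,dx$. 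Estimating the right--hand side exactly as in the proof of Proposition \ref{away} (by the assumptions on $R$, $\int R'(u_\e)u_\e\,dx\le c'\|u_\e\|_p^p$) together with the classical Sobolev embedding $H^1_r(\R^2)\subset L^p(\R^2)$, $p>2$ (cf. Theorem \ref{compatti}), gives $\|u_\e\|_p^p\le c''\|u_\e\|_{H^1_r}^p$, hence
\[
\|u_\e\|_{H^1_r}^2\le c''\,\|u_\e\|_{H^1_r}^p .
\]
Since $p>2$ and $u_\e\neq 0$ (Proposition \ref{away}), dividing by $\|u_\e\|_{H^1_r}^2>0$ produces $\|u_\e\|_{H^1_r}\ge (c'')^{-1/(p-2)}=:C>0$, which is the assertion.

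The only delicate point --- and the reason the lemma is not a mere restatement of Proposition \ref{away} --- is ensuring that $C$ is genuinely independent of $\e$. This is so because: (i) equation \eqref{wsol1} for $u$ does not contain $\e$ at all; (ii) the only place where $\A_\e$ (and hence, indirectly, $\e$) could have entered the above chain is the term $\int|k\nabla\theta-\A_\e|^2u_\e^2\,dx$, which has a sign and is thrown away; and (iii) the constants $c',c''$ depend only on $R$ and on the Sobolev constant, both intrinsic. In particular one does \emph{not} need to bound the mountain--pass level $c_\e$ from below, which would be harder since a priori $c_\e$ might degenerate as $\e\ra 0$.
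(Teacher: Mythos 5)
Your proof is correct and follows essentially the same route as the paper: test \eqref{wsol1} with $v=u_\e$, discard the nonnegative term $\int|k\nabla\theta-\A_\e|^2u_\e^2\,dx$, bound $\int R'(u_\e)u_\e\,dx\le c\|u_\e\|_{H^1_r}^p$ via the growth condition and Sobolev embedding, and divide by $\|u_\e\|_{H^1_r}^2>0$. Your added remarks on why the constant is $\e$-independent make explicit what the paper leaves implicit, but the argument is the same.
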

\begin{proof}
Since every $(u_\e,\A_\e)$ satisfies (\ref{Pe}), we have
$$
\|u_\e\|_{H^1_r}^2+\int|k\nabla\theta-\A_\e|^2 u_\e^2\,dx-\int R'(u_\e)u_\e\,dx=0
$$
hence
$$
\|u_\e\|_{H^1_r}^2\le\int R'(u_\e)u_\e\,dx\le c\|u_\e\|_{H^1_r}^p
$$
which shows that $\{u_\e\}$ is bounded away from zero.
\end{proof}
We also need to know that the sequence $\{u_\e\}$ is bounded in $\H_r$. This is stated in the next Lemma. We first give some preliminary remarks.

Recalling the definition of the mountain pass level
$$
c_\e=\inf_{\gamma\in\Gamma}\max_{0\le t\le 1}J_\e(\gamma(t)),
$$
where $\Gamma=\{\gamma\in C([0,1],V):\gamma(0)=(0,\mathbf 0),\,\gamma(1)=(\bar{u},\mathbf{0})\}$ and $J_{\e}(\bar u, \mathbf 0) \le 0$ (see Proposition \ref{MP}), consider the path
$$
\gamma_0:t\in[0,1]\mapsto (t\bar u,\mathbf{0})\in V.
$$
Then
$$
c_\e\le\max_{0\le t\le 1}J_\e(\gamma_{0}(t))=\max_{0\le t\le 1}J_0(t\bar u,\mathbf{0})
$$
which says that $\{c_\e\}$ is bounded by some positive constant $K$ which does not depend on $\e.$

Also we know that
\begin{align*}
    \partial_u J(u_\e,\A_\e)=0\\
    J(u_\e,\A_\e)\le c_\e.
\end{align*}
Again evaluating $J(u_\e,\A_\e)-\frac{1}{p} \partial_u J(u_\e,\A_\e)[u_\e]\le c_\e$
we find
\begin{multline*}
    \frac{p-2}{2p}\|\nabla u_\e\|_2^2+\frac{p-2}{2p}\int|k\nabla\theta-\A_\e|^2 u^2_\e\,dx
    +\frac{1}{2}\|\nabla\A_\e\|_2^2 \\
    +\frac{\e}{2}\int|\A_\e|^2\,dx+\int \left(W(u_\e)-\frac{1}{p}W'(u_\e)u_\e\right)\,dx
    \le c_\e\le K.
\end{multline*}
Since by the assumptions 
$$
W(u_\e)-\frac{1}{p}W'(u_\e)u_\e\ge\frac{p-2}{2p}\,u_\e^2
$$
we find
\begin{equation} \label{limitatezza}
	\frac{p-2}{2p} \left( \|u_\e\|^2_{H^1_r}+ \int|k\nabla\theta-\A_\e|^2 u^2_\e\,dx \right)
	+\frac{1}{2}\|\nabla\A_\e\|_2^2+\frac{\e}{2}\int|\A_\e|^2\,dx\le K
\end{equation}
so that $\{u_\e\}$  is bounded in $H^1_r$.

Moreover by (\ref{limitatezza}) other information can be deduced.
\begin{lemma} \label{Lemma}
The following facts hold:
\begin{enumerate}
 \item $\{u_\e\}$ is bounded in $\H_r$,
 \item $\{\mathbf A_\e\}$ is bounded in $H^1_{loc}$,
 \item $\lim_{\e\ra0} \e\int\A_\e\mathbf V\,dx=0$ for any $\mathbf V\in (L^2)^3.$
\end{enumerate}
\end{lemma}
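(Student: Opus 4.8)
The plan is to extract all three assertions from the uniform bound (\ref{limitatezza}), which already guarantees that $\|u_\e\|_{H^1_r}$, $\|\nabla\A_\e\|_2$, $\int|k\nabla\theta-\A_\e|^2u_\e^2\,dx$ and $\e\int|\A_\e|^2\,dx$ are all $\le K$ with $K$ independent of $\e\in(0,1)$. The delicate feature is that, as $\e\ra0$, the coercive term $\frac\e2\int|\A_\e|^2$ degenerates, so $\{\A_\e\}$ is \emph{not} uniformly bounded in $(H^1)^3$ and the embedding $H^1\subset L^q$ cannot be used uniformly in $\e$; one must instead exploit the one-dimensional radial structure of ${\cal A}$.

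The first --- and, I expect, the only nontrivial --- step is a uniform $L^\infty$ bound on $\A_\e$. Writing $\A_\e=b_\e\nabla\theta=\frac{b_\e(r)}{r}\mathbf t$ with $b_\e\in{\cal X}$, and using $b_\e(0)=0$ and $b_\e(r)=\int_0^r b_\e'(t)\,dt$, a Hardy-type Cauchy--Schwarz estimate gives
$$
|b_\e(r)|\le\left(\int_0^r\frac{b_\e'(t)^2}{t}\,dt\right)^{1/2}\left(\int_0^r t\,dt\right)^{1/2}\le\frac{r}{\sqrt2}\left(\int_0^{+\infty}\frac{b_\e'(t)^2}{t}\,dt\right)^{1/2}.
$$
Since the integral on the right equals $\frac{1}{2\pi}\|\nabla\A_\e\|_2^2$ (a routine computation, using (\ref{id}) and that $\A_\e$ is divergence free), we get $\|\A_\e\|_\infty=\sup_{r>0}|b_\e(r)|/r\le\frac{1}{2\sqrt\pi}\|\nabla\A_\e\|_2$, so $\{\A_\e\}$ is bounded in $L^\infty(\R^2)$ uniformly in $\e$.

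Granted this, assertions 2 and 3 are immediate. A uniform $L^\infty$ bound on $\A_\e$ yields a uniform bound of $\A_\e$ in $L^2(B_R)$ for every ball $B_R$; together with the uniform bound on $\|\nabla\A_\e\|_2$ this shows $\{\A_\e\}$ is bounded in $H^1(B_R)$ for every $R$, hence in $H^1_{loc}$. For assertion 3, from $\e\int|\A_\e|^2\,dx\le K$ we obtain $\|\A_\e\|_2\le\sqrt{K/\e}$, so for any $\mathbf V\in(L^2)^3$
$$
\left|\e\int\A_\e\mathbf V\,dx\right|\le\e\,\|\A_\e\|_2\,\|\mathbf V\|_2\le\sqrt{K\e}\,\|\mathbf V\|_2\ra0.
$$

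For assertion 1, since $\{u_\e\}$ is already bounded in $H^1_r$ it remains to bound $\int u_\e^2/r^2\,dx$. I would start from the reverse triangle inequality $|k\nabla\theta-\A_\e|^2\ge\bigl(|k|/r-|\A_\e|\bigr)^2\ge k^2/r^2-2|k||\A_\e|/r$ and integrate against $u_\e^2$ to obtain
$$
k^2\int\frac{u_\e^2}{r^2}\,dx\le\int|k\nabla\theta-\A_\e|^2u_\e^2\,dx+2|k|\int\frac{|\A_\e|}{r}u_\e^2\,dx;
$$
the last integral is estimated, via Cauchy--Schwarz, the $L^\infty$ bound on $\A_\e$ and Young's inequality, by $\tfrac{k^2}{2}\int u_\e^2/r^2\,dx+C\|\A_\e\|_\infty^2\|u_\e\|_2^2$. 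Absorbing the first summand into the left-hand side (here $k\neq0$ is used) and invoking the uniform bounds on $\int|k\nabla\theta-\A_\e|^2u_\e^2\,dx$, $\|\A_\e\|_\infty$ and $\|u_\e\|_2$, one gets a uniform bound on $\int u_\e^2/r^2\,dx$, i.e. $\{u_\e\}$ is bounded in $\H_r$. In summary, the whole difficulty is concentrated in the preliminary $L^\infty$ estimate for $\A_\e$, which substitutes for the missing uniform Sobolev control; once it is available, the rest is routine.
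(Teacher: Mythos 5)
Your proof is correct, and its engine is the same as the paper's: the weighted Cauchy--Schwarz estimate $|b_\e(r)|\le\bigl(\int_0^r t\,dt\bigr)^{1/2}\bigl(\int_0^r (b_\e')^2/t\,dt\bigr)^{1/2}\le Cr\|\nabla\A_\e\|_2$, which converts the uniform bound on $\|\nabla\A_\e\|_2$ from (\ref{limitatezza}) into the pointwise bound $|b_\e(r)|\le C r$ --- the paper's inequality $(b_\e(r))^2\le r^2K$ is exactly your $L^\infty$ bound on $\A_\e=\frac{b_\e(r)}{r}\mathbf t$ in disguise. Parts 2 and 3 then coincide with the paper's (your part 3 is in fact cleaner: the direct estimate $\e\|\A_\e\|_2\le\sqrt{K\e}$ avoids the paper's detour through weak $L^2$ convergence of $\sqrt\e\,\A_\e$). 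The only genuine divergence is in part 1: the paper splits $\int u_\e^2/r^2\,dx$ into $r\le1$ and $r\ge1$, bounds the far region by $\|u_\e\|_2^2$, and for the near region bounds $\int|k-b_\e|^2 u_\e^2/r^2\,dx$ from below by $c\int_0^1 u_\e^2/r\,dr$ using that $|k-b_\e(r)|$ stays away from $0$ for small $r$ (which again rests on $|b_\e(r)|\le r\sqrt K$, and strictly speaking needs a further split at a small $\delta$ depending on $K$ and $k$ to justify the uniform lower bound on all of $(0,1]$); you instead expand $|k\nabla\theta-\A_\e|^2\ge k^2/r^2-2|k||\A_\e|/r$ and absorb the cross term via Young's inequality and the $L^\infty$ bound. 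Your absorption argument is legitimate (the quantity being absorbed is finite for each fixed $\e$ since $u_\e\in\H_r$) and arguably more robust, since it sidesteps the uniform-lower-bound issue near $r=1$; what the paper's version buys is that it isolates exactly where $k\ne0$ and $b_\e(0)=0$ enter.
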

\begin{proof}
1. Since we have already proved that $\{u_\e\}$ is bounded in $H^1_r$,
it remains to prove the boundedness of $\int\frac{u_\e^2}{r^2}\,dx$. We write
$$
 \int\frac{u_\e^2}{r^2}\,dx=\int_0^1 \frac{u_\e^2}{r}\,dr+\int_1^{+\infty} \frac{u_\e^2}{r}\,dr
$$
and both integrals in the right hand side are uniformly bounded in $\e$, indeed
$$
 \int_1^{+\infty} \frac{u_\e^2}{r}\,dr\le \int_1^{+\infty} r u_\e^2 \,dr
 \le\int u_\e^2\,dx\le\|u_\e\|_{H^1_r}^2\le K\,,
$$ 
and by (\ref{limitatezza})
\begin{eqnarray*}
	\frac{2p}{p-2}K &\ge& \int|k\nabla\theta-\A_\e|^2u_\e^2\,dx =
                         \int|k-b_\e|^2\frac{u_\e^2}{r^2}\,dx\\
                  &\ge& \int_0^1|k-b_\e|^2\frac{u_\e^2}{r}\,dr\ge c
	                       \int_0^1\frac{u_\e^2}{r}\,dr
\end{eqnarray*}       
where the constant $c$ can be chosen independently on $\e$ since $k\neq0$ and $b_\e(0)=0$.

2. We have only to show that for any $\rho >0$
$$
\int_{B_\rho}|\A_\e|^2\,dx \quad\text{ is bounded independently on }\,\e
$$
where $B_\rho$ is the ball in $\mathbb R^2$ centered in $0$ and with radius $\rho$.

We have
\begin{eqnarray} \label{loc}
 K&\ge&\int|\nabla\A_\e|^2\,dx=\int|\nabla\times\A_\e|^2\,dx\nonumber\\
 &=& \int|\nabla\times(b_\e\nabla\theta)|^2\,dx=\int\frac{(b'_\e)^2}{r^2}\,dx\nonumber\\
 &=& \int_0^{+\infty} \frac{(b'_\e)^2}{r}\,dr.
\end{eqnarray}
Let us fix $\rho>0$. For $r\le \rho$, by the elementary inequality $b_\e(r)=\int_0^{r} b'_\e(t)\,dt\le \sqrt{r}(\int_0^r (b'_\e(t))^2\,dt)^{1/2}$ and (\ref{loc}) we find
\begin{eqnarray}\label{r}
(b_\e(r))^2
  &\le& r\int_0^r (b'_\e(t))^2\,dt \nonumber\\
  &\le& r^2 \int_0^r \frac{(b'_\e(t))^2}{t}\,dt \nonumber\\
  &\le& r^2 K.
\end{eqnarray}
Now we can evaluate $\int_{B_\rho}|\A_\e|^2\,dx$. It results
$$
\int_{B_\rho} |\A_\e|^2 \,dx =\int_0^\rho \frac{(b_\e)^2}{r}\,dr \le K \int_0^\rho r\,dr = \frac{1}{2} \rho^2 K
$$
in virtue of (\ref{r}).

3. By (\ref{limitatezza}) we deduce that $\{\sqrt{\e}\A_\e\}_{\e\in(0,1)}$ is bounded in $L^2$, so up to a sub-sequence, it weakly converges to a certain $\mathbf X$ in $(L^2)^3$, that is
$$
\int \sqrt{\e}\A_\e \mathbf V\,dx\ra\int\mathbf X\mathbf V\,dx\quad\forall\, \mathbf V\in (L^2)^3
$$
and hence the conclusion follows.
\end{proof}

As a consequence of Lemma \ref{Lemma}, we infer that there exists $(u_0,\A_0)\in\H_r\times{\cal A} = V$, such that as $\e\ra0$
\begin{align}
  &u_\e\wra u_0\,\quad\text{ in}\,\, \H_r,&\label{convdeb1}\\
	&\A_\e\wra\A_0\quad\text{in}\,\, H^1_{loc},&\label{convdeb2}
\end{align}
and so by Theorem \ref{compatti} and usual Sobolev embedding theorems
\begin{align}
  &u_\e\ra u_0\,\quad\text{ in}\,\,L^p\quad\,\text{for}\,\,\,2<p<+\infty,\label{uLp}&\\
	&\A_\e\ra\A_0\quad\text{in}\,\, L^p_{loc}\,\,\,\text{for}\,\,\,1\le p<+\infty,&\label{Lpforte}\\
	&\A_\e\ra\A_0\quad\text{a.e. in }\,\, \mathbb R^2.\label{ae}&
\end{align}

The proof of Theorem \ref{Main} is finished once we prove the following
\begin{proposition} \label{fine}
The couple $(u_0,\A_0)$  is a solution of (\ref{original1}) and (\ref{original2})
in the sense of distributions.
\end{proposition}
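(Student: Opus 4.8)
The plan is to pass to the limit $\e\to 0$ in the weak formulations \eqref{wsol1}–\eqref{wsol2} satisfied by $(u_\e,\A_\e)$, using the convergences \eqref{convdeb1}–\eqref{ae}, and then to upgrade the resulting weak identities to distributional identities against $\D$ and $(\D)^3$. Two things must be checked: first, that the limit $(u_0,\A_0)$ satisfies the equations tested against the restricted spaces $\H_r$ and ${\cal A}$ (or at least against smooth compactly supported radial test objects); second, that such identities actually mean $(u_0,\A_0)$ solves \eqref{original1}–\eqref{original2} in $\D'$, i.e. against \emph{all} test functions, not merely radial ones.

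First I would take $\mathbf V\in{\cal A}_0$, say $\mathbf V=b\nabla\theta$ with $b\in C_0^\infty(\R^2\setminus\{0\})$ radial, supported in an annulus $\{\rho_1\le r\le\rho_2\}$, and pass to the limit in
\[
\int\nabla\A_\e\cdot\nabla\mathbf V\,dx+\e\int\A_\e\mathbf V\,dx=\int(k\nabla\theta-\A_\e)\mathbf V u_\e^2\,dx .
\]
The first term converges by \eqref{convdeb2} (weak $H^1_{loc}$ convergence and $\mathbf V$ compactly supported); the $\e$-term vanishes by item 3 of Lemma \ref{Lemma}; the right-hand side converges because on the compact annulus $\A_\e\to\A_0$ strongly in $L^p_{loc}$ by \eqref{Lpforte}, $u_\e\to u_0$ strongly in $L^p$ by \eqref{uLp}, and $|\nabla\theta|=1/r$ is bounded on the annulus — so the integrand converges in $L^1$. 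Hence $\int\nabla\A_0\cdot\nabla\mathbf V=\int(k\nabla\theta-\A_0)\mathbf V u_0^2$ for all such $\mathbf V$, and since $-\Delta\A_0=\nabla\times(\nabla\times\A_0)$ for $\A_0\in{\cal A}$ by the Lemma in Section \ref{A natural constraint}, this is \eqref{original2} tested against radial $b\nabla\theta$. To get \eqref{original2} in $\D'$, i.e. against an arbitrary $\mathbf V\in(\D)^3$, I would invoke the Palais symmetric criticality structure already set up in Theorem \ref{natural}: the same decomposition $\mathbf V=a(x)\mathbf t+b(x)\mathbf r$, the averaging over $O(2)$, and the facts that $\A_0=\frac{b_0}{r}\mathbf t$ is tangential and $\nabla\times(b\mathbf r)=0$, show the identity for general $\mathbf V$ reduces to the one already proved; alternatively, one checks directly that both sides of \eqref{original2} are tangential radial vector fields so testing against the ``missing'' components $a(x)\mathbf t$ (non-radial) and $b(x)\mathbf r$ gives $0=0$. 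I would present the symmetric-criticality version since it mirrors Theorem \ref{natural}.

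For the scalar equation, I would test \eqref{wsol1} against $v\in\D_r$ and pass to the limit using exactly the estimates in the proof of Proposition \ref{wls}: the term $\int\nabla u_\e\nabla v$ converges by weak $H^1$ convergence, $\int u_\e v$ and $\int R'(u_\e)v$ converge by \eqref{uLp} (and $R'(s)\le c' s^{p-1}$), the term $k^2\int u_\e v/r^2$ converges because $v/r^2\in L^\infty$ with compact support away from where it matters (more carefully, $v$ radial smooth compactly supported gives $v/r^2$ bounded once one notes $v$ vanishes near $0$ is \emph{not} automatic — but $u_\e\wra u_0$ in $\H_r$ means $u_\e/r\wra u_0/r$ in $L^2$, so pairing against $v/r\in L^2$ works), and the cross term $\int\nabla\theta\,\A_\e u_\e v$ converges by the same annulus argument as above combined with \eqref{uLp}. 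This yields \eqref{original1} against $\D_r$. Then again by the symmetric criticality / averaging argument (the scalar equation is rotation invariant and $u_0$ is radial, so testing against the non-radial part of a general $v\in\D$ contributes nothing after symmetrization), I would conclude \eqref{original1} holds against all $v\in\D$.

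The main obstacle I anticipate is the handling of the singular weight $1/r$ near the origin when upgrading from ${\cal A}$-test functions and $\D_r$-test functions to genuine distributional test functions: one must be sure that the origin carries no spurious distributional mass, i.e. that $-\Delta\A_0-(k\nabla\theta-\A_0)u_0^2$, which is supported at $\{0\}$ once we know the equation holds on $\R^2\setminus\{0\}$, is in fact zero — this follows from the finite-energy bounds $\int|\nabla\A_0|^2<\infty$, $\int(1+r^{-2})u_0^2<\infty$ from Theorem \ref{Main} (so $u_0\in\H_r$, $\A_0$ has $L^2$ gradient), which prevent a Dirac mass (a point in $\R^2$ has zero $H^1$-capacity). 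I would make this precise by noting that any distribution supported at a single point in $\R^2$ and lying in $H^{-1}$ must vanish, and $-\Delta\A_0$ and $(k\nabla\theta-\A_0)u_0^2$ are both in $H^{-1}_{loc}$ by the energy bounds. The analogous capacity argument disposes of the origin for the $u$-equation. Everything else is routine passage to the limit built on the already-established compactness \eqref{uLp}–\eqref{ae} and Lemma \ref{Lemma}.
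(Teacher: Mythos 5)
Your overall strategy --- pass to the limit in \eqref{wsol1}--\eqref{wsol2} using \eqref{convdeb1}--\eqref{ae} and Lemma \ref{Lemma}, then upgrade to distributional identities --- is the paper's strategy, and your limit passages on $\R^2\setminus\{0\}$ are essentially the ones in the paper. One structural difference: you pass to the limit only against symmetric test objects ($v\in\D_r$, $\mathbf V=b\nabla\theta$) and then symmetrize the \emph{limit} equations by an $O(2)$-averaging argument, whereas the paper invokes Theorem \ref{natural} at the level of the $\e$-problem (Remark \ref{Oss}), so that $(u_\e,\A_\e)$ already satisfies \eqref{eq1}--\eqref{eq2} against arbitrary $v\in\H$ and $\mathbf V\in(H^1)^3$, and then takes the limit against general test functions directly. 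Both routes are legitimate; the paper's is cleaner for the vector equation, because $(k\nabla\theta-\A_0)u_0^2\in L^1$ (it is the product of $|k\nabla\theta-\A_0|u_0\in L^2$ and $u_0\in L^2$), so one may test with any $\mathbf V\in(\D)^3$ from the outset and no issue at the origin ever arises there; the only nontrivial point, which the paper does address and you omit, is the identification $\nabla\A_\e\rightharpoonup\nabla\A_0$ in $L^2$ (so that $\nabla\A_0\in L^2$ globally, not just locally).

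The genuine gap is in your treatment of the origin for the scalar equation. Your capacity argument presupposes that $-\Delta u_0+|k\nabla\theta-\A_0|^2u_0+W'(u_0)$ defines a distribution on all of $\R^2$ lying in $H^{-1}_{loc}$ (or at least in $H^{-1}_{loc}+L^1_{loc}$), and you assert this ``by the energy bounds''. It does not follow. The dangerous term is $k^2u_0/r^2$ (and the cross term $2kb_0u_0/r^2$): pairing it with $v$ gives $\int (u_0/r)(v/r)\,dx$, which is controlled by $\|v\|_{H^1}$ only via a Hardy inequality at the origin, and the Hardy inequality fails in dimension two (the sharp constant $(N-2)^2/4$ vanishes; indeed $v\equiv 1$ near $0$ has $\int_{B}v^2/r^2\,dx=+\infty$). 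Nor does $\int u_0^2/r^2\,dx<\infty$ imply $u_0/r^2\in L^1_{loc}$: a radial profile $u_0(r)\sim 1/\log(1/r)$ near $0$ has $\int_0^{1/2}u_0^2\,dr/r<\infty$ but $\int_0^{1/2}u_0\,dr/r=+\infty$. So a priori the equation need not make sense in $\D'(\R^2)$ across the origin, and the ``distribution supported at a point'' argument cannot even start. This is precisely the difficulty the paper flags ($\D\not\subset\H$) and resolves by the cut-off construction of \cite{BF}: one tests \eqref{eq1} with $v^{\pm}\chi_n$, where $\chi_n$ vanishes for $r\le 1/n$, $\chi_{n+1}\ge\chi_n$ and $v^{\pm}\chi_n\rightharpoonup v^{\pm}$ in $H^1$; all the regular terms pass to the limit, and the singular term $\int u_0v^{\pm}\chi_n/r^2\,dx$ then converges by \emph{monotone} convergence (using $u_0\ge0$ and the monotonicity of $\chi_n$) to a finite limit because the sum of all terms is zero. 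The local integrability of $u_0v/r^2$ is thus a \emph{conclusion} of that argument, not an available hypothesis. Your proof needs either this step or an a priori decay rate for $u_0$ at the origin to close.
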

\begin{proof}
By Proposition \ref{e-soluz}, $(u_\e,\A_\e)$ are weak solutions of (\ref{Pe})
\begin{align*}
    -\Delta u_\e+|k\nabla\theta-\A_\e|^2 u_\e+W'(u_\e)=0,\\
    -\Delta\A_\e+\e\A_\e=(k\nabla\theta-\A_\e)u_\e^2,
\end{align*}
i.e. satisfy (\ref{wsol1}) and (\ref{wsol2}). Moreover by Theorem \ref{natural}
they satisfies (\ref{wsol1}) and (\ref{wsol2}) also with $v\in\H$ and $\mathbf V\in(H^1)^3$. In particular, for $v\in{\cal D}\left(\mathbb R^2\setminus\{0\}\right)$, (\ref{wsol1}) reads
\begin{multline*}
    \int\nabla u_\e\nabla v\,dx+k^2\int\frac{u_\e v}{r^2}\,dx+\int u_\e v\,dx+\int\A_\e^2 u_\e v\,dx\\
    -2k\int\nabla\theta\A_\e u_\e v\,dx-\int R'(u_\e)v\,dx=0.
\end{multline*}
By (\ref{convdeb1}) we have
\begin{equation} \label{prodscal}
\begin{array}{c}
  \int\nabla u_\e\nabla v\,dx+ k^2\int\frac{u_\e v}{r^2}\,dx+\int u_\e v\,dx \ra  \\[0.3cm]
   \ra \int\nabla u_0\nabla v\,dx+k^2\int\frac{u_0 v}{r^2}\,dx +\int u_0 v\,dx
\end{array}
\end{equation}
and it is clear that
\begin{equation} \label{R}
	\int R'(u_\e)v\,dx\ra\int R'(u_0)v\,dx.
\end{equation}
Moreover, if $B$ is a ball containing the support of $v$ we have
\begin{multline*}	
		\left|\int|\A_\e|^2u_\e v\,dx-\int|\A_0|^2u_0 v\,dx\right|\le\\
		\int_B|v||\A_\e|^2|u_\e-u|\,dx+\int_B|v|\left||\A_\e|^2-|\A_0|^2\right||u|\,dx
\end{multline*}
and by (\ref{uLp}), (\ref{Lpforte}) and  (\ref{ae})
$$
\int_B|v||\A_\e|^2|u_\e-u|\,dx\le c\|\A_\e\|_{L^{3}(B)}^2\|u_\e-u\|_3\ra0
$$
$$
\int_B|v|\left||\A_\e|^2-|\A_0|^2\right||u|\,dx\le c\left\||\A_\e|^2-|\A_0|^2\right\|_{L^2(B)}\|u\|_2\ra 0.
$$
This shows that
\begin{equation} \label{100}
	\int|\A_\e|^2u_\e v\,dx\ra\int|\A_0|^2u_\e v\,dx.
\end{equation}
Similarly,
\begin{multline*}
	\left|\int\nabla\theta\A_\e u_\e v\,dx-\int\nabla\theta\A_0 u_0 v\,dx\right|\le\\
	\int_B\left|\nabla\theta\A_\e u_\e v-\nabla\theta\A_\e u_0 v\right|\,dx+
	\int_B\left|\nabla\theta\A_\e u_0 v-\nabla\theta\A_0 u_0 v\right|\,dx
\end{multline*}
and it holds
\begin{eqnarray*}
	\int_B\frac{|v|}{r}|\A_\e||u_\e-u_0|\,dx\le c\|\A_\e\|_{L^{3/2}(B)}\|u_\e-u_0\|_{L^3(B)}\ra0,\\
	\int_B\frac{|v|}{r}|u_0||\A_\e-\A_0|\,dx\le c\|u_0\|_2\|\A_\e-\A_0\|_{L^2(B)}\ra0
\end{eqnarray*}
(notice that the function $v/r$ still belongs to ${\cal D}\left(\mathbb R^2\setminus\{0\}\right)$).
In other words
\begin{equation}\label{120}
	\int\nabla\theta\A_\e u_\e v\,dx\ra \int\nabla\theta\A_0 u_0 v\,dx.
\end{equation}
Putting together (\ref{prodscal}), (\ref{R}), (\ref{100}) and (\ref{120})
we infer that for any $v\in{\cal D}\left(\mathbb R^2\setminus\{0\}\right)$
\begin{equation}\label{130}
	\int\nabla u_0 \nabla v\,dx+\int|k\nabla\theta-\A_0|^2 u_0 v\,dx+\int W(u_0)v\,dx=0.
\end{equation}
To conclude that $(u_0,\A_0)$ is a solution of (\ref{original1}) in the sense of
distributions we need to show that (\ref{130}) is still true for $v\in{\cal D}.$ This is done by following an argument of \cite{BF} to which the reader is referred, here we sketch the main steps.

\medskip \emph{Step 1}
First, one defines a family of smooth and radial functions on $\mathbb R^2$
satisfying
\begin{itemize}
    \item $\chi_n(r)=1$\, for $r\ge2/ n,$
   \item $\chi_n(r)=0$\, for $r\le1/ n,$
    \item $|\chi_n(r)|\le1,$
    \item $|\nabla\chi_n(r)|\le2n,$
    \item $\chi_{n+1}(r)\ge\chi_n(r).$
\end{itemize}
It is not difficult to prove that if $\varphi\in H^1\cap L^\infty$ has bounded support then, possibly up to sub-sequences,
\begin{equation}\label{lemmaBF}
	\varphi_n:=\varphi\chi_n\wra\varphi \text{ in }H^1.
\end{equation}
Thank to these cut-off functions can be proved that $(u_0,\A_0)$ is a solution
of (\ref{eq1}) in the sense of distributions.

\medskip \emph{Step 2}
Now take $v\in{\cal D}$, and choose $\varphi_n=v^+\chi_n\in\H$ as test functions
in (\ref{eq1}). Observe that there exists a ball $B$ such that all the functions
$\varphi_n$ have support in $B.$ Then the proof of Theorem 8 of \cite{BF} can be adapted
here. Hence, taking  the limit in $n$ and making use of (\ref{lemmaBF})
(that in this case means $\varphi_n\wra v^+$ in $H^1$) we find
that (\ref{eq1}) is satisfied with $v^+$ as test functions.
Since the same is true for $v^-$, this yields that $(u_0,\A_0)$ solves in the sense
of distributions (\ref{eq1}), or equivalently (\ref{original1}).

\medskip We now prove that $(u_0,\A_0)$ is a solution of (\ref{original2}) in the sense of distributions. Certainly $(u_0,\A_0)$ satisfies also (\ref{eq2}) with $\mathbf V\in(\D)^3$.
We have to prove that $(u_0,\A_0)$ solves equation (\ref{original2}) in the sense
of distributions, or equivalently, since we are in the natural constraint, the equation
\begin{equation}\label{finale}
	-\Delta\mathbf A=\left(k\nabla\theta-\A\right)u^2
\end{equation}
 in the sense of distributions.

Therefore take $\mathbf V\in ({\cal D})^3$ and let $B$ be a ball containing the support of $\mathbf V$. We know that
$$
\int\nabla\A_\e\cdot\nabla\mathbf V\,dx+\e\int\A_\e\mathbf V\,dx
=\int(k\nabla\theta-\A_\e)\mathbf Vu_\e^2\,dx
$$
and we want to pass to the limit for $\e\ra0.$

We have
\begin{multline*}
\left|\int(k\nabla\theta-\A_\e)\mathbf Vu_\e^2\,dx-\int(k\nabla\theta-\A_0)\mathbf Vu_0^2\,dx\right|\le\\
\int_B u_\e^2|\mathbf V||\A_0-\A_\e|\,dx+\int_B|\mathbf V||k\nabla\theta-\A_0||u_\e^2-u_0^2|\,dx.
\end{multline*}
Now, again using (\ref{uLp}) and (\ref{Lpforte})
$$
\int_B u_\e^2|\mathbf V||\A_0-\A_\e|\,dx\le max|\mathbf V|\,\|u_\e\|^2_4\|\A_0-\A_\e\|_{L^2(B)}\ra0
$$
$$
\int_B|\mathbf V||k\nabla\theta-\A_0||u_\e^2-u_0^2|\,dx\le
  max|\mathbf V|\,\|k\nabla\theta-\A_0\|_{L^{3/2}(B)}\|u_\e^2-u_0^2\|_3\ra0
$$
so that
\begin{equation} \label{300}
	\int(k\nabla\theta-\A_\e)\mathbf Vu_\e^2\,dx\ra\int(k\nabla\theta-\A_0)\mathbf Vu_0^2\,dx.
\end{equation}
Moreover by (\ref{limitatezza}) there exists $\mathbf B\in L^2$ such that
$\nabla\A_\e\wra \mathbf B$ in $L^2$ and therefore the convergence is in the sense
of distributions, that is
$$
\nabla\A_\e\ra\mathbf B\quad\text{ in } \ \ {\cal D}' .
$$
On the other hand (\ref{convdeb2}) implies $\A_\e\ra\A$ in ${\cal D}'$ and then
$$
\nabla\A_\e\ra\nabla\A_0\ \  \text{in} \ \ {\cal D}'
$$
so necessarily $\nabla\A_0=\mathbf B\in L^2$. Finally, by virtue of (3) of Lemma \ref{Lemma}
\begin{equation} \label{350}
	\int\nabla\A_\e\cdot\nabla\mathbf V\,dx+\e\int\A_\e\mathbf V\,dx\ra\int\nabla\A_0\cdot\nabla\mathbf V\,dx.
\end{equation}
By (\ref{300}) and (\ref{350}) equation (\ref{finale}) is satisfied in the sense of distributions. Hence $(u_0,\A_0)$ satisfies (\ref{original2}) in the sense of distributions.
\end{proof}

\section{Acknowledgments}
The authors are supported by MIUR - PRIN2005 ``Metodi variazionali e topologici
nello studio di fenomeni non lineari''.\\ The authors are grateful to Vieri Benci for useful and stimulating discussions. The third author wishes also to thank the ``Dipartimento di Matematica Applicata\,--\,U. Dini'' of the University of Pisa, for the warm hospitality during the visiting period in which this work has been carried out.

\end{document}